\crefname{thm}{Thm.}{}
\crefname{prop}{Prop.}{}
\crefname{lem}{Lem.}{}
\crefname{cor}{Cor.}{}
\crefname{exa}{Ex.}{}
\theoremstyle{theorem}
\newtheorem{thm}{Theorem}
\newtheorem{prop}{Proposition}
\newtheorem{lem}{Lemma}
\newtheorem{cor}{Corollary}
\newtheorem{rem}{Remark}
\newtheorem{exa}{Example}
\newtheorem{conj}{Conjecture}
\newcommand{\defect}{\delta}
 \DeclareMathOperator\Gal{Gal}
 \DeclareMathOperator\Pic{Pic}
 \newcommand{\hst}{\widehat{\h}}
\newcommand{\fd}{\mathfrak{d}}
\newcommand{\fP}{\mathfrak{P}}
\newcommand{\A}{\mathbb{A}}
\newcommand{\bP}{\mathbb{P}}
\newcommand{\WP}{\mathbb{WP}}
\newcommand{\Z}{\mathbb{Z}}
\newcommand{\Q}{\mathbb{Q}}
\newcommand{\R}{\mathbb{R}}
\newcommand{\C}{\mathbb{C}}
\newcommand{\bG}{\mathbb{G}}
\newcommand{\cO}{\mathcal{O}}
\newcommand{\cL}{\mathcal{L}}
\newcommand{\cC}{\mathcal{C}}
\newcommand{\w}{\mathbf{q}}
\newcommand{\x}{\mathbf{x}}
\def\u{\mathbf{u}}
\newcommand{\h}{\mathfrak{h}}
\newcommand{\fa}{\mathfrak{a}}
\newcommand{\fb}{\mathfrak{b}}
\newcommand{\fc}{\mathfrak{c}}
\newcommand{\p}{\mathfrak{p}}
\newcommand{\lcm}{\operatorname{lcm}}
\def\wgcd{\operatorname{wgcd}}
\def\Vol{\operatorname{Vol}}
\def\Nm{\operatorname{Nm}}
\newcommand{\Br}{\operatorname{Br}}
\title[Arithmetic Sparsity and Obstructions]{Arithmetic Sparsity and Obstructions in Weighted Projective Spaces}
\author{Tanush Shaska}
\address{Department of Mathematics and Statistics, Oakland University, Rochester, MI, 48309, USA}
\email{shaska@oakland.edu}
\date{}
\begin{document}

\begin{abstract}
We study rational and algebraic points of bounded height on weighted projective spaces. A weighted projective space \(\WP^n_{\w}\), with weights \(\w = (q_0, \dots, q_n)\), carries two natural heights: the tautological height \(\hst\), attached to the tautological bundle on the associated stack, and the weighted height \(\h = H(\phi(\,\cdot\,))^{1/q}\), the normalized pullback of the Weil height under the Veronese morphism \(\phi : \WP^n_{\w} \to \bP^n\), where \(q = \lcm(q_0, \dots, q_n)\); the two differ by a defect \(\defect \geq 1\). For \(\hst\) we prove a Schanuel-type theorem over any number field \(k\) of degree \(m\), with leading term \(c_k(\w) X^{mQ}\), where \(Q = q_0 + \cdots + q_n\).

Our main result concerns \(\h\) over \(\Q\), for arbitrary coprime weights. A point of \(\bP^n(\Q)\) lifts along \(\phi\) only if its valuation vector at every prime lies in a set \(M_{\w}\) cut out by Kummer congruences. We prove that the points with all coordinates nonzero satisfy
\[
Z^{\circ}_{\h}\big( \WP^n_{\w}(\Q), X \big) = X^{q\,a(\w)} P_{\w}(\log X) + O\big( X^{q\,a(\w) - \theta} \big), \qquad \theta > 0,
\]
where \(P_{\w}\) has exact degree \(\beta(\w)\) and positive leading coefficient, and \(a(\w)\) and \(\beta(\w)\) are the value and the dimension of the optimal face of a linear program over the minimal elements of \(M_{\w}\). The exponent need not equal the projective benchmark \(q(n+1)\), and can exceed \(Q\). Since the coordinate strata are again weighted projective spaces, the full count follows by stratification, and a proper stratum may dominate. We also count points of fixed degree when the exponents \(q/q_i\) are pairwise coprime, and conjecture the asymptotic for \(\h\) over an arbitrary number field.
\end{abstract}

\subjclass[2020]{11G50, 11G35, 14G05, 14M25}
\keywords{Weighted projective spaces, rational points, algebraic points, weighted heights, arithmetic sparsity, torsors under roots of unity, Batyrev--Manin conjecture}

\maketitle

\section{Introduction}

Weighted projective spaces are usually studied through a Veronese-type morphism $\phi : \WP^n_{\w}(k) \to \bP^n(k)$, by way of the image. For arithmetic questions this has a cost: rational points on the weighted space are sparse compared with those on the projective image, which makes Diophantine problems more natural on the weighted side than on the projective one. This was the motivation for weighted heights, introduced in \cite{2018-4} and developed in \cite{2019-1, 2022-1}; in \cite{2023-1} a version of Vojta's conjecture was proved for weighted projective varieties, showing that Diophantine approximation extends to the weighted setting once the height and the divisor theory are adjusted.

The computations of \cite{2024-03} made the point concrete: $\phi$ is not surjective on $k$-rational points. In the weighted variety of genus-two curves with extra automorphisms $\cL_2 \subset \WP^3_{(2,4,6,10)}(\Q)$ there are no points of weighted height $\leq 2$, a statement that is hard to see in the projective image.

The sparsity has two sources. First, by \cite{2019-1} the classical height on the image is a power of the weighted height, which distorts the distribution of rational points. Second, $\phi$ is not surjective, and many points of $\bP^n(k)$ have no rational preimage in $\WP^n_{\w}(k)$. This paper measures both, through asymptotic formulas for the number of rational and algebraic points of bounded weighted height.

Counting rational points of bounded height in projective space goes back to Schanuel \cite{schanuel1979}, and to Schmidt \cite{schmidt1995} and Widmer \cite{widmer2009} for algebraic points generating an extension of degree $e$ over a number field $k$. Points on weighted projective spaces and stacks have been counted in \cite{deng1998}, \cite{Dar21}, \cite{ESZB23} and \cite{bruin2023}; our methods rest instead on the weighted height of \cite{2019-1}.

A weighted projective space carries two natural heights. The first is the tautological height $\hst$, whose finite local factors are governed by the weighted greatest common divisor of \cite{2019-1}, and which agrees with the height of the tautological bundle on the weighted projective stack in the sense of \cite{ESZB23}. The second is the weighted height $\h$ of \cite{2019-1}, the normalized pullback of the Weil height under
\begin{equation}
\phi : \WP^n_{\w}(k) \to \bP^n(k), \qquad \phi([x_0 : \cdots : x_n]) = [x_0^{q/q_0} : \cdots : x_n^{q/q_n}],
\end{equation}
where $q = \lcm(q_0, \ldots, q_n)$, namely $\h(\p) = H(\phi(\p))^{1/q}$. Unlike the classical Veronese embedding of $\bP^n$, this map is typically not surjective on $k$-rational points. Throughout, $\WP^n_{\w}(k)$ denotes the points admitting a representative $\x \in k^{n+1}$; as \cref{rem-fod} explains, this is in general a proper subset of the points whose field of definition is $k$, and the discrepancy is exactly the obstruction of \cref{sec-5}.

For the tautological height we prove a Schanuel-type theorem over an arbitrary number field $k$ of degree $m$ (\cref{thm-1}, \cref{thm-2}): for well-formed weights and $Q > 1 + 1/m$,
\[
Z_{\hst}(\WP^n_{\w}(k), X) = c_k(\w)\, X^{mQ} + O\left( X^{mQ - 1} \log X \right),
\]
where $Q = q_0 + \cdots + q_n$ and the constant $c_k(\w)$ differs from Schanuel's $S_k(n)$ only in that the zeta value and the unit-domain factor are taken at $Q$ rather than $n+1$. This recasts the count of \cite{deng1998} through the weighted content ideal, valid over every class group and with an explicit error term. The exponent $Q$ is the Fujita invariant of $\cO(1)$ on $\WP^n_{\w}$, and it is the exponent predicted for $\hst$ by the stacky Batyrev--Manin--Malle framework of \cite{ESZB23}; see also \cite{Dar21}.

Our main result concerns the weighted height, where the arithmetic of the weights enters differently. A point of $\bP^n(\Q)$ lifts along $\phi$ if and only if its coordinates satisfy a family of Kummer conditions, one congruence at every prime together with a condition on signs (\cref{prop:lifting-criterion}); the admissible valuation vectors form a set $M_{\w}$, and the counting function of $\h$ is governed by a linear program over its finite set of minimal elements. Writing $a(\w)$ and $\beta(\w)$ for the value and the dimension of the optimal face of this program, we prove that for arbitrary coprime weights the points with all coordinates nonzero satisfy
\[
Z^{\circ}_{\h}\big(\WP^n_{\w}(\Q), X\big) = X^{q\,a(\w)} P_{\w}(\log X) + O\left( X^{q\,a(\w) - \theta} \right), \qquad \theta > 0,
\]
with $P_{\w}$ of exact degree $\beta(\w)$ and positive leading coefficient; see \cref{thm:main}. Since the coordinate strata of $\WP^n_{\w}$ are themselves weighted projective spaces, the full counting function follows by stratification (\cref{cor:full-count}): it grows like the lexicographically maximal stratum, and a proper stratum may dominate.

Neither invariant is of the shape the Batyrev--Manin heuristics would suggest. The exponent $q\,a(\w)$ need not equal the projective benchmark $q(n+1)$, nor the Fujita invariant $Q$ that governs $\hst$: for $\w = (2,3,5)$ one has $q\,a(\w) = 12$ against $Q = 10$. The power of the logarithm is not a Picard rank, which is trivial here since $\Pic(\WP^n_{\w}) \cong \Z$, but the dimension of the optimal face of the linear program, that is, the rank of the lattice of Kummer conditions; for $(2,3,5)$ it equals $1$. The weighted height is not covered by the stacky frameworks of \cite{ESZB23, Dar21}, and \cref{thm:main} is what takes their place for it: a Batyrev--Manin asymptotic read off a polyhedral program rather than off the geometry of the ambient space.

That the degree of $P_{\w}$ is exactly $\beta(\w)$ is the delicate point. The multivariable Tauberian theorem of de la Bret\`eche \cite[Thm.~1]{delaBreteche2001} bounds it only from above, and its refinement \cite[Thm.~2(iv)]{delaBreteche2001} requires the polar family to have full rank, which fails already for the well-formed weights $(2,2,3,3)$. The exact degree is instead pinned by an unconditional lower bound (\cref{lem:lower}), proved by parametrizing admissible tuples along the primal optimal face, so that \cref{thm:main} needs no nondegeneracy hypothesis.

The two counting problems are joined by a defect identity $\h_k = \hst_k / \defect_k$ (\cref{prop-1}), where the defect $\defect_k(\p) \geq 1$ measures the fractional parts of the weighted valuations of the coordinates; its interplay with the Kummer conditions produces the full range of behavior of \cref{thm:main}. The role of well-formedness is structural. For pairwise coprime exponents $n_i = q/q_i$, such as $\w = (1, q, \dots, q)$, the morphism $\phi$ is the well-formalization isomorphism, no Kummer condition survives, and the count reduces to projective space; there we record the asymptotic for points of fixed degree $e$ (\cref{thm-3}), with exponent $qme(n+1)$ and Widmer's constant. Well-formedness with $q \geq 2$ forces the opposite: a nontrivial Kummer condition at every prime (\cref{lem:wf-pairs}), so the sparsity of \cref{thm:main} is unavoidable there. The theorem itself requires only coprimality, as it must, since arbitrary coprime weights arise as the strata of well-formed spaces.

Compared to \cite{deng1998}, \cite{Dar21}, \cite{ESZB23} and \cite{bruin2023}, all of which concern the tautological height, \cref{thm:main} is the first asymptotic for the weighted height, in particular on every well-formed space. \cref{thm-3} is the first such statement for points of fixed degree in the sense of Schmidt \cite{schmidt1995} and Widmer \cite{widmer2009}, though it is the case $d_\phi = 1$ of an identity that holds for all weights, and the substance lies in \cref{thm:main}. The weighted Batyrev--Manin conjecture of \cref{sec-7} extends \cref{thm:main} to an arbitrary number field.

\section{Preliminaries}\label{sec-2}

Let $k$ be a number field of degree $m = [k:\Q]$, with algebraic closure $\bar{k}$. Denote by $M_k$ the set of places of $k$, with $M_k^0$ the non-Archimedean places and $M_k^\infty$ the Archimedean places. For each place $v \in M_k$, let $|\cdot|_v$ be the normalized absolute value, and $n_v = [k_v : \Q_v]$ the local degree, where $k_v$ and $\Q_v$ are completions at $v$. The weighted projective space $\WP^n_{\w}(\bar{k})$ with weights $\w = (q_0, \ldots, q_n)$, where $q_i$ are positive integers, consists of points $\p = [x_0 : \cdots : x_n]$, with $x_i \in \bar{k}$ not all zero, under the equivalence $\p \sim [ \lambda^{q_0} x_0 : \cdots : \lambda^{q_n} x_n ]$ for $\lambda \in \bar{k}^*$. Throughout we write $q = \lcm(q_0, \ldots, q_n)$ and $Q = q_0 + \cdots + q_n$. The field of definition $k(\p)$ is the smallest extension of $k$ containing the ratios $x_i^{q/q_i} / x_j^{q/q_j}$ for all $i, j$ with $x_j \neq 0$; equivalently, $k(\p)$ is the field of definition of the image of $\p$ under the Veronese morphism \cref{Veronese} below; see \cite{2019-1, 2022-1}.

\subsection{Weil Height and Schanuel's Result}

For a point $P = [z_0 : \cdots : z_n] \in \bP^n(k)$, the Weil height is
\begin{equation}\label{eq-weil}
H(P) = \prod_{v \in M_k} \max \{ |z_0|_v^{n_v}, \ldots, |z_n|_v^{n_v} \}^{\frac{1}{[k:\Q]}} .
\end{equation}
Schanuel~\cite{schanuel1979} showed
\begin{equation}
\begin{split}
Z_H(\bP^n(k), X) 	& 	= |\{ P \in \bP^n(k) : H(P) \leq X \}| \\
				& 	= S_k(n) X^{m(n+1)} + O(X^{m(n+1)-1} \log X),
\end{split}
\end{equation}
where
\begin{equation}\label{eq-schanuel-const}
S_k(n) = \frac{h_k R_k}{w_k \zeta_k(n+1)} \left( \frac{2^{r_k} (2\pi)^{s_k}}{\sqrt{|\Delta_k|}} \right)^{n+1} (n+1)^{r_k + s_k - 1},
\end{equation}
with $h_k$ the class number, $R_k$ the regulator, $w_k$ the number of roots of unity, $\zeta_k$ the zeta function, $\Delta_k$ the discriminant, and $r_k$, $s_k$ the numbers of real and complex places of $k$, so that $m = r_k + 2 s_k$.
This result underpins counting points of fixed degree, as generalized by Schmidt~\cite{schmidt1995} and Widmer~\cite{widmer2009}.

\subsection{Weighted projective spaces and the Veronese morphism}

The morphism $\phi : \WP^n_{\w}(k) \to \bP^n(k)$,
\begin{equation}\label{Veronese}
\phi([x_0 : \cdots : x_n]) = [x_0^{q/q_0} : \cdots : x_n^{q/q_n}],
\end{equation}
is called the \textbf{Veronese morphism} and plays a key role in the theory of weighted projective spaces.   The \textbf{degree of $\phi$} is defined as the degree of the field extension $[k(\WP_{\w}^n) : \phi^* k(\bP^n)]$ or, equivalently, the index of the graded ring inclusion $k[y_0, \ldots, y_n] \hookrightarrow k[x_0, \ldots, x_n]$ given by $y_i \mapsto x_i^{q / q_i}$. This degree represents the multiplicity of the generic fiber, though fiber cardinalities may vary due to singularities and the weighted action.

\begin{figure}[h]
\begin{center}
\begin{tikzcd}
\WP^n_{\w}(k) \arrow[r, "\phi"] \arrow[d, hook] & \bP^n(k) \arrow[d, hook] \\
\WP^n_{\w}(\bar{k}) \arrow[r, "\phi"] & \bP^n(\bar{k})
\end{tikzcd}
\caption{The weighted $\phi$ and its extension to the algebraic closure.}
\end{center}
\end{figure}

The morphism $\phi : \WP^n_{\w} \to \bP^n$ is not in general an embedding; it is finite and dominant but may identify distinct points or require field extensions for preimages.

\begin{lem}\label{lem:d_phi_definition}
For $\w = (q_0, \ldots, q_n)$, let $d = \gcd(q_0, \ldots, q_n)$ and $\phi$ be the Veronese morphism \cref{Veronese}. The map $\phi$ is finite, with degree 
\[
d_\phi = \frac{q^n d}{\prod_{i=0}^n q_i}.
\]
\end{lem}

\begin{proof}
For $P = [y_0 : \cdots : y_n] \in \bP^n(\bar{k})$, assume the point is general so that all $y_i \neq 0$. A preimage $\p = [x_0 : \cdots : x_n]$ satisfies $x_i^{q/q_i} = \lambda y_i$ for some $\lambda \in \bar{k}^*$; replacing $\p$ by $\mu \star \p$ scales $\lambda$ by $\mu^q$, so we may fix $\lambda = 1$. Then $x_i = \zeta_i y_i^{q_i/q}$ where $\zeta_i^{q/q_i} = 1$, and since $q/q_i$ is an integer, the number of choices for each $\zeta_i$ is $q/q_i$, giving a total of $\prod_{i=0}^n (q/q_i)$ naive preimages. These preimages are identified under the weighted scaling $\p \sim [\mu^{q_0} x_0 : \cdots : \mu^{q_n} x_n]$ for $\mu \in \bar{k}^*$. For the image to remain the same,
\[
\phi([\mu^{q_0} x_0 : \cdots : \mu^{q_n} x_n]) = [\mu^q y_0 : \cdots : \mu^q y_n] = [y_0 : \cdots : y_n],
\]
requiring $\mu^q = 1$. Thus, the group acting is the group of $q$-th roots of unity, of order $q$. The action on the set of $(\zeta_0, \dots, \zeta_n)$ is given by $\zeta_i' = \mu^{q_i} \zeta_i$. The stabilizer of a point is the set of $\mu$ such that $\mu^{q_i} = 1$ for all $i$, which has order $d = \gcd(q_0, \dots, q_n)$, so the effective group order is $q / d$. Therefore, the number of distinct preimages is
\[
\prod_{i=0}^n \frac {(q/q_i) } { (q/d) }= \frac {q^n d } { \prod_{i=0}^n q_i }.
\]
\end{proof}

\subsection{Weighted Height}

The weighted height defined in \cite{2019-1} extends the Weil height to capture the graded structure of $\WP^n_{\w}$. For $\p = [x_0 : \cdots : x_n] \in \WP^n_{\w}(k)$, it is
\begin{equation}\label{eq-wheight}
\h_k(\p) = \prod_{v \in M_k} \max \left\{ |x_0|_v^{n_v / q_0}, \ldots, |x_n|_v^{n_v / q_n} \right\},
\end{equation}
where the exponents $n_v / q_i$ reflect the weighted scaling of coordinates, and coordinates are chosen integral in some extension. The following can be found in \cite{2019-1}.

\begin{lem}\label{lem-wheight}
The following hold:
\begin{enumerate}[label=\roman*)]
    \item $\h_k(\p)$ is independent of homogeneous coordinates. For $\p' = [\lambda^{q_0} x_0 : \cdots : \lambda^{q_n} x_n]$, the product formula $\prod_{v \in M_k} |\lambda|_v^{n_v} = 1$ ensures $\h_k(\p') = \h_k(\p)$.

    \item $\h_k(\p) \geq 1$. Choose $\lambda = (1/x_i)^{1/q_i}$ for some $x_i \neq 0$, yielding $\p' = [y_0 : \cdots : 1 : \cdots : y_n]$, where each term in the product is at least $1$ due to normalized absolute values.

    \item The weighted height satisfies $\h_k(\p) = H(\phi(\p))^{[k : \Q]/q}$.

    \item For $\p \in \WP^n_{\w}(\bar{\Q})$ and $\sigma \in \Gal(\bar{\Q}/\Q)$, $\h(\p^\sigma) = \h(\p)$, as absolute values are invariant under conjugation.

    \item For a finite extension $L/k$, $\h_L(\p) = \h_k(\p)^{[L:k]}$, reflecting the scaling of height with field degree.
\end{enumerate}
\end{lem}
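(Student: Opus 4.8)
The plan is to treat part (3) as the linchpin and to deduce the others from it together with the product formula and the standard behaviour of normalized absolute values under field extension and Galois action. First I would prove (3). Writing $\phi(\p) = [x_0^{q/q_0}:\cdots:x_n^{q/q_n}]$ and using that $t\mapsto t^{q}$ is increasing on $\R_{\geq 0}$, so that $\max_i\bigl(|x_i|_v^{n_v q/q_i}\bigr) = \bigl(\max_i |x_i|_v^{n_v/q_i}\bigr)^{q}$ for every place $v$, one obtains
\[
H(\phi(\p))^{[k:\Q]} = \prod_{v\in M_k}\max_i\bigl(|x_i|_v^{n_v/q_i}\bigr)^{q} = h_k(\p)^{q},
\]
that is, $h_k(\p) = H(\phi(\p))^{[k:\Q]/q}$. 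This is automatically consistent because each coordinate $x_i^{q/q_i}$ is a genuine element of $k$, so $\phi(\p)$ is an honest point of $\bP^n(k)$ on which the Weil height is already defined.

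For (1), in the coordinates $\p' = [\lambda^{q_0}x_0:\cdots:\lambda^{q_n}x_n]$ one has $|\lambda^{q_i}x_i|_v^{n_v/q_i} = |\lambda|_v^{n_v}\,|x_i|_v^{n_v/q_i}$, and since the factor $|\lambda|_v^{n_v}$ is positive and independent of $i$ it pulls out of the maximum; the product formula $\prod_v|\lambda|_v^{n_v}=1$ then gives $h_k(\p')=h_k(\p)$. The only point requiring care is that the scaling parameter $\lambda$ lies in $\bar k^{*}$, not necessarily in $k^{*}$; I would handle this either by computing both heights over the number field $k(\lambda)$ and invoking (5), or — to sidestep any circularity — by deducing (1) directly from (3), since $\phi(\p')=\phi(\p)$ in $\bP^n$. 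Part (2) then follows from (3) and the classical bound $H\geq 1$ on $\bP^n(k)$; alternatively, choosing a representative with one coordinate equal to $1$ (as in the statement) makes every local factor at least $1$, so the product over all places is at least $1$.

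For (4), a field automorphism $\sigma\in\mathrm{Gal}(\bar\Q/\Q)$ permutes the places of the field of definition and satisfies $|x^\sigma|_{v^\sigma}=|x|_v$; re-indexing the product over places leaves $h$ unchanged, after reducing to the case of a number field in the usual way. For (5), the cleanest route is again via (3):
\[
h_L(\p) = H(\phi(\p))^{[L:\Q]/q} = H(\phi(\p))^{[L:k][k:\Q]/q} = h_k(\p)^{[L:k]}.
\]
Directly, one uses the second fundamental identity $\sum_{w\mid v} n_w = [L:k]\,n_v$ together with $|x|_w=|x|_v$ for $x\in k$, $w\mid v$, so that $\prod_{w\mid v}\max_i|x_i|_w^{n_w/q_i} = \bigl(\max_i|x_i|_v^{n_v/q_i}\bigr)^{[L:k]}$, and then takes the product over $v\in M_k$.

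I expect the only genuine (and modest) obstacle to be organizational: keeping the normalizations of $|\cdot|_v$, the local degrees $n_v$, and the exponent conventions in $H$ versus $h$ mutually compatible, and in particular establishing (5) (or its Weil-height analogue) before using it to license the $\lambda\in\bar k^{*}$ step in (1) and (2). Once (3) is in place the remaining items are immediate, so the real content is the monotonicity observation $\max_i(t_i^{q}) = (\max_i t_i)^{q}$, which is what converts the weighted height into a power of the Weil height of the Veronese image.
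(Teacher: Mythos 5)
Your proposal is correct, and the key identity you build everything on --- $\max_i\bigl(|x_i|_v^{n_v/q_i}\bigr)^{q}=\max_i|x_i^{q/q_i}|_v^{n_v}$, giving $h_k(\p)=H(\phi(\p))^{[k:\Q]/q}$ --- is exactly the relation the paper records as item (3). The paper itself does not prove the lemma; it cites~\cite{2019-1} and attaches one-line justifications to each item, which correspond to your ``direct'' alternatives: the product formula for well-definedness, the representative with a coordinate equal to $1$ for the lower bound, invariance of absolute values under conjugation for (4), and the degree relation $\sum_{w\mid v}n_w=[L:k]\,n_v$ for (5). Your organization differs in making (3) the linchpin and deducing (1), (2), and (5) from it; this is a genuine (if modest) improvement in rigor, because the scaling parameter $\lambda\in\bar k^{*}$ and the normalizing choice $\lambda=(1/x_i)^{1/q_i}$ generally take you out of $k$, a point the paper's inline sketches gloss over and which your route through the Veronese image (where $\phi(\p')=\phi(\p)$ in $\bP^n$ and the absolute Weil height is already coordinate- and field-independent) handles cleanly. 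The only thing to keep explicit, as you note, is the ordering: prove (3) and (5) (or at least the field-extension compatibility of the defining product) before using them to license the $\bar k^{*}$ steps in (1) and (2), so no circularity arises. With that caveat observed, your argument is complete and matches the content of the cited source.
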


The absolute weighted height is $\h(\p) = \h_K(\p)^{1/[K:\Q]}$ for $\p \in \WP^n_{\w}(K)$, independent of the choice of $K$ containing $k(\p)$. In particular, combining this with \cref{lem-wheight}~iii),
\begin{equation}\label{eq-h-pullback}
\h(\p) = H(\phi(\p))^{1/q} ,
\end{equation}
so the absolute weighted height is the pullback of the Weil height under the Veronese morphism, normalized by the exponent $1/q$. These properties ensure $\h$ is suitable for arithmetic statistics, mirroring the Weil height while accommodating weights, with recent extensions to local and global heights in~\cite{2022-1}.

\subsection{Weighted Greatest Common Divisor}

To normalize coordinates for counting, we recall the weighted greatest common divisor from \cite{2019-1}. For $\x = (x_0, \ldots, x_n) \in \Z^{n+1}$, $\wgcd(\x)$ is the largest $d \in \Z_{>0}$ such that $d^{q_i} \mid x_i$ for all $i$; the absolute $\overline{\wgcd}(\x)$ is the largest real $d$ with $d^{q_i} \in \Z$ and $d^{q_i} \mid x_i$. 

A point $\p = [x_0 : \cdots : x_n]$ is normalized if $\wgcd(x_0, \ldots, x_n) = 1$, unique up to $q$-th roots of unity in well-formed spaces (where $\gcd(q_0, \ldots, \hat{q}_i, \ldots, q_n) = 1$ for each $i$) by \cite[Lemma~7]{2019-1}. Over a general number field, divisibility in $\cO_k$ is only a partial order and this element-wise definition does not single out a canonical generator;  the weighted content ideal \cref{eq-content} introduced below, which requires no generator and agrees with the ideal $(\wgcd(\x))$ whenever $k$ has class number one.

\subsection{Finiteness and Primitive Points}

Finiteness of points with bounded height and degree ensures well-defined asymptotic counts, analogous to Northcott's theorem for projective spaces; this was proved for weighted projective spaces in~\cite{2019-1}.

\begin{prop}
\label{prop-northcott}
For constants $c_0$ and $d_0$, the set
\begin{equation}
\{\p \in \WP^n_{\w}(\bar{k}) : \h(\p) \leq c_0, \, [k(\p):k] \leq d_0\}
\end{equation}
is finite.
\end{prop}

\begin{proof}
By \cref{eq-h-pullback}, $\h(\p) \leq c_0$ implies $H(\phi(\p)) \leq c_0^q$, and since $k(\p)$ is the field of definition of $\phi(\p)$, we have $[\Q(\phi(\p)) : \Q] \leq m [k(\p):k] \leq m d_0$. Northcott's theorem for $H$ on $\bP^n$ implies the set
\begin{equation}
\{P \in \bP^n(\bar{\Q}) : H(P) \leq c_0^q, \, [\Q(P):\Q] \leq m d_0\}
\end{equation}
is finite, and $\phi$ is finite-to-one by \cref{lem:d_phi_definition}, so the preimage is finite.
\end{proof}

To facilitate counting in \cref{sec-4}, we classify points by their minimal field of definition, following Schmidt~\cite{schmidt1995} and Widmer~\cite{widmer2009}. Define the set of points of fixed degree
\begin{equation}
\WP^n_{\w}(k;e) = \{\p \in \WP^n_{\w}(\bar{k}) : [k(\p):k] = e\},
\end{equation}
where $k(\p) = k\big( x_i^{q/q_i} / x_j^{q/q_j} : x_j \neq 0 \big)$ for $\p = [x_0 : \cdots : x_n]$.

For a field $K$ with $k \subseteq K \subseteq \bar{k}$ and $[K:k] = e$, a point $\p \in \WP^n_{\w}(K)$ is \textbf{primitive over $k$} if $k(\p) = K$, that is, if $\p$ is defined over no proper subfield of $K$ containing $k$. The primitive points form the set
\begin{equation}\label{eq-primitive}
\WP^n_{\w}(K/k) = \{\p \in \WP^n_{\w}(K) : k(\p) = K\}.
\end{equation}
Let $\cC_e = \{K : k \subseteq K \subseteq \bar{k}, \ [K:k] = e\}$. Each $\p$ is primitive over $k$ for exactly one field of $\cC_e$, namely $k(\p)$, so the sets $\WP^n_{\w}(K/k)$ with $K \in \cC_e$ are pairwise disjoint and
\begin{equation}
\WP^n_{\w}(k;e) = \bigcup_{K \in \cC_e} \WP^n_{\w}(K/k).
\end{equation}
%

\subsection{The tautological height and the defect}

The weighted height $\h_k$ of \cref{eq-wheight} is one of two natural heights carried by a weighted projective space. In this section we introduce  the \emph{tautological height} $\hst_k$: the adelic height whose finite local factors are governed by the weighted greatest common divisor and which, over $\Q$ and in normalized coordinates, is simply $\max_i |x_i|^{1/q_i}$. In the stack-theoretic frameworks of \cite{ESZB23, Dar21}, $\hst$ is the height attached to the tautological bundle on the weighted projective stack with weights $\w$, generalizing the naive height of an elliptic curve, which is the case $\w = (4,6)$; see \cref{rem-stack}. The two heights differ by a \emph{defect} $\delta_k(\p) \geq 1$ measuring the fractional parts of the weighted valuations of the coordinates, and the distribution of this defect is responsible for the different growth rates of the counting functions of $\hst$ and $\h$ studied in \cref{sec-3}, \cref{sec-4}, and \cref{sec-5}.

For $v \in M_k^0$ we denote by $\fP_v$ the corresponding prime ideal of $\cO_k$ and normalize the absolute values so that $|x|_v^{n_v} = \Nm(\fP_v)^{-v(x)}$, where $v(\cdot)$ is the valuation with $v(k^\times) = \Z$ and $\Nm$ denotes the ideal norm. For a tuple $\x = (x_0, \dots, x_n) \in k^{n+1}$, not all coordinates zero, and $v \in M_k^0$ we set
\begin{equation}\label{eq-theta}
\theta_v(\x) = \min_{x_i \neq 0} \frac{v(x_i)}{q_i} \in \Q .
\end{equation}
The \emph{weighted content ideal} of $\x$ is the fractional ideal
\begin{equation}\label{eq-content}
\fd_k(\x) = \prod_{v \in M_k^0} \fP_v^{\lfloor \theta_v(\x) \rfloor} .
\end{equation}
Since $\lfloor \min_i a_i \rfloor = \min_i \lfloor a_i \rfloor$ for any real numbers $a_i$, we have equivalently $\fd_k(\x) = \prod_{v} \fP_v^{\min_i \lfloor v(x_i)/q_i \rfloor}$. Hence, for integral tuples, $\fd_k(\x)$ is the ideal-theoretic form of the weighted greatest common divisor of \cite{2019-1}; over $\Q$, or over any $k$ of class number one, it is the ideal generated by $\wgcd(\x)$. Two remarks are in order: 

First, \cref{eq-content} requires no generator: only the ideal $\fd_k(\x)$ and its norm enter the sequel, so the definition is unambiguous over every number field, including those with nontrivial class group. 

Second, \cref{eq-content} extends the $\wgcd$ from integral tuples to arbitrary tuples in $k^{n+1}$, the ideal $\fd_k(\x)$ being genuinely fractional when negative valuations occur; see \cref{exa-2}.

The \emph{tautological height} of $\p = [x_0 : \dots : x_n] \in \WP^n_{\w}(k)$ is defined as
\begin{equation}\label{eq-hst}
\begin{split}
\hst_k(\p) & = \Nm \left( \fd_k(\x) \right)^{-1} \cdot \prod_{v \in M_k^\infty} \max_{0 \leq i \leq n} |x_i|_v^{n_v/q_i} \\
           & = \prod_{v \in M_k^0} \Nm(\fP_v)^{-\lfloor \theta_v(\x) \rfloor} \cdot \prod_{v \in M_k^\infty} \max_{0 \leq i \leq n} |x_i|_v^{n_v/q_i} .
\end{split}
\end{equation}
Thus $\hst_k$ has exactly the shape of the weighted height $\h_k$, except that at each finite place the exponent $\theta_v$ is replaced by its integer part. The next lemma shows that this is a well-defined, positive height on $\WP^n_{\w}(k)$.

\begin{lem}\label{lem-hst}
Let $\p \in \WP^n_{\w}(k)$. The following hold:
\begin{enumerate}[label=\roman*)]
\item $\hst_k(\p)$ does not depend on the choice of weighted homogeneous coordinates for $\p$;

\item $\hst_k(\p) \geq \h_k(\p) \geq 1$;

\item if $k = \Q$ and $\x \in \Z^{n+1}$ is a representative with $\wgcd(\x) = 1$, then $\hst(\p) = \max_{0 \leq i \leq n} |x_i|^{1/q_i}$.
\end{enumerate}
\end{lem}

\begin{proof}
i) Replace $\x$ by $\lambda \star \x = (\lambda^{q_0} x_0, \dots, \lambda^{q_n} x_n)$ for $\lambda \in k^\times$. At a finite place, $v(\lambda^{q_i} x_i)/q_i = v(\lambda) + v(x_i)/q_i$, hence $\theta_v(\lambda \star \x) = \theta_v(\x) + v(\lambda)$. Since $v(\lambda) \in \Z$, we get $\lfloor \theta_v(\lambda \star \x) \rfloor = \lfloor \theta_v(\x) \rfloor + v(\lambda)$, so $\fd_k(\lambda \star \x) = \fd_k(\x) \cdot (\lambda)$ and
\[
\Nm \left( \fd_k(\lambda \star \x) \right)^{-1} = \Nm \left( \fd_k(\x) \right)^{-1} \prod_{v \in M_k^0} |\lambda|_v^{n_v} .
\]
At the Archimedean places the maximum scales by $\prod_{v \in M_k^\infty} |\lambda|_v^{n_v}$, and the product formula $\prod_{v \in M_k} |\lambda|_v^{n_v} = 1$ gives $\hst_k(\lambda \star \x) = \hst_k(\x)$.

ii) At every finite place $\lfloor \theta_v \rfloor \leq \theta_v$, hence $\Nm(\fP_v)^{-\lfloor \theta_v \rfloor} \geq \Nm(\fP_v)^{-\theta_v}$, and the Archimedean factors of $\hst_k$ and $\h_k$ coincide; thus $\hst_k(\p) \geq \h_k(\p)$, and $\h_k(\p) \geq 1$ by \cref{lem-wheight}~ii).

iii) If $\x$ is integral with $\wgcd(\x) = 1$, then $\min_i \lfloor v(x_i)/q_i \rfloor = 0$ at every finite place, so $\fd_\Q(\x) = (1)$ and only the Archimedean factor survives.
\end{proof}

The \emph{defect} of $\p \in \WP^n_{\w}(k)$ is defined as
\begin{equation}\label{eq-defect}
\delta_k(\p) = \prod_{v \in M_k^0} \Nm(\fP_v)^{\theta_v(\x) - \lfloor \theta_v(\x) \rfloor} \in [1, \infty) .
\end{equation}
The exponent $\theta_v - \lfloor \theta_v \rfloor \in [0,1) \cap \Q$ is unchanged under $\x \mapsto \lambda \star \x$, since the shift $v(\lambda)$ is an integer, so $\delta_k(\p)$ is an invariant of the point; the product is finite since $\theta_v = 0$ for all but finitely many $v$.

\begin{prop}[Defect identity]\label{prop-1}
For every $\p \in \WP^n_{\w}(k)$,
\[
\h_k(\p) = \frac{\hst_k(\p)}{\delta_k(\p)} .
\]
In particular $\h_k(\p) \leq \hst_k(\p)$, with equality if and only if $\theta_v(\x) \in \Z$ for every finite place $v$. Over $k = \Q$, for an integral representative $\x$ with $\wgcd(\x) = 1$, equality holds if and only if $\gcd(x_0, \dots, x_n) = 1$, where $\gcd$ denotes the ordinary greatest common divisor.
\end{prop}

\begin{proof}
Fix a finite place $v \in M_k^0$. By the normalization $|x_i|_v^{n_v} = \Nm(\fP_v)^{-v(x_i)}$, the local factor of $\h_k$ at $v$ is
\[
\max_{0 \leq i \leq n} |x_i|_v^{n_v/q_i} = \max_i \Nm(\fP_v)^{-v(x_i)/q_i} = \Nm(\fP_v)^{-\min_i v(x_i)/q_i} = \Nm(\fP_v)^{-\theta_v(\x)} ,
\]
the middle equality because $\Nm(\fP_v) > 1$, so the maximum is attained at the smallest exponent. The corresponding factor of $\hst_k$ replaces $\theta_v$ by $\lfloor \theta_v \rfloor$ by \cref{eq-hst}, while the Archimedean factors of $\h_k$ and $\hst_k$ coincide. Hence
\[
\frac{\hst_k(\p)}{\h_k(\p)} = \prod_{v \in M_k^0} \Nm(\fP_v)^{\theta_v(\x) - \lfloor \theta_v(\x) \rfloor} = \delta_k(\p) ,
\]
which is the identity. Since $\theta_v - \lfloor \theta_v \rfloor \in [0,1)$ and $\Nm(\fP_v) > 1$, every factor is $\geq 1$, so $\h_k(\p) \leq \hst_k(\p)$, with equality if and only if $\theta_v(\x) \in \Z$ at every finite place.

For the last statement, if $\x$ is integral and normalized then $\theta_v \in [0,1)$ for every $v = v_p$, so $\theta_v \in \Z$ if and only if $\theta_v = 0$, if and only if $\min_i v_p(x_i) = 0$, if and only if $p \nmid \gcd(x_0, \dots, x_n)$.
\end{proof}

\begin{cor}\label{cor-hst}
For any constants $c_0$ and $d_0$, the set $\{ \p \in \WP^n_{\w}(\bar{k}) : \hst(\p) \leq c_0, \, [k(\p):k] \leq d_0 \}$ is finite.
\end{cor}

\begin{proof}
By \cref{lem-hst}~ii) this set is contained in $\{ \p : \h(\p) \leq c_0, \, [k(\p):k] \leq d_0 \}$, which is finite by \cref{prop-northcott}.
\end{proof}

Let us illustrate the two heights and the defect.

\begin{exa}\label{exa-1}
Let $\p = [3^2 : 3^4 : 3^6 : 3^{10}] \in \WP^3_{(2,4,6,10)}(\Q)$. 
The only relevant prime is $p = 3$, where the weighted minimum \cref{eq-theta} is $\theta_3 = \min \left( \tfrac{2}{2}, \tfrac{4}{4}, \tfrac{6}{6}, \tfrac{10}{10} \right) = 1 \in \Z$.
 Hence $\fd(\p) = (3)$, $\wgcd(\p) = 3$, the normalized point is $\tfrac{1}{3} \star \p = [1:1:1:1]$, and $\delta(\p) = 1$. Both heights agree:
\(
\h(\p) = \hst(\p) = 3 \cdot 3^{-1} = 1 .
\)
\qed
\end{exa}

\begin{exa}\label{exa-2}
Let $\p = [1 : 1/3 : 1 : 1] \in \WP^3_{(2,4,6,10)}(\Q)$. At $p = 3$ the valuations are $v_3(x_i) = 0, -1, 0, 0$, so
\[
\theta_3 = \min \left( 0, -\tfrac{1}{4}, 0, 0 \right) = -\tfrac{1}{4}, \qquad \lfloor \theta_3 \rfloor = -1, \qquad \fd(\p) = (3)^{-1},
\]
a genuinely fractional content ideal. A normalized integral representative is obtained by scaling with $\lambda = 3$, namely $3 \star \p = [3^2 : 3^3 : 3^6 : 3^{10}] = [9 : 27 : 729 : 59049]$, for which $\min_i \lfloor v_3(x_i)/q_i \rfloor = \min(1, 0, 1, 1) = 0$, so $\wgcd = 1$. The tautological height may be computed from either representative:
\[
\begin{split}
\hst(\p) & = \Nm \left( (3)^{-1} \right)^{-1} \cdot \max \left( 1, 3^{-1/4}, 1, 1 \right) = 3 \cdot 1 = 3 , \\
\hst(\p) & = \max \left( 9^{1/2}, 27^{1/4}, 729^{1/6}, 59049^{1/10} \right) = 3 .
\end{split}
\]
The defect is $\delta(\p) = 3^{\theta_3 - \lfloor \theta_3 \rfloor} = 3^{3/4}$, and indeed
\[
\h(\p) = \frac{\hst(\p)}{\delta(\p)} = \frac{3}{3^{3/4}} = 3^{1/4} ,
\]
in agreement with the direct computation of $\h(\p)$, whose factor at $v = 3$ is
\[
\max \left( 1, |1/3|_3^{1/4}, 1, 1 \right) = 3^{1/4}
\]
and whose Archimedean factor is $1$. Note that the normalized representative has $\gcd(9, 27, 729, 59049) = 9 \neq 1$, consistent with \cref{prop-1}.
\qed
\end{exa}

\begin{rem}\label{rem-base}
The two heights behave differently under base change. By \cref{lem-wheight}~v), $\h_L(\p) = \h_k(\p)^{[L:k]}$ for any finite extension $L/k$, whereas $\hst$ fails this relation, since ramification changes the integer parts $\lfloor \theta_v \rfloor$. 

Take $\p = [2 : 2] \in \WP^1_{(1,2)}(\Q)$: here $\theta_2 = \min(1, \tfrac{1}{2}) = \tfrac{1}{2}$, so $\delta_\Q(\p) = 2^{1/2}$, $\hst_\Q(\p) = 2$, and $\h_\Q(\p) = 2^{1/2}$. 

Over $L = \Q(\sqrt{2})$, with $\fP = (\sqrt{2})$, one has $\theta_\fP = \min(2, 1) = 1 \in \Z$, so $\delta_L(\p) = 1$ and
\[
\hst_L(\p) = \h_L(\p) = \h_\Q(\p)^2 = 2 \neq \hst_\Q(\p)^2 = 4 .
\]
The defect thus measures the local weighted structure of $\p$, visible over $k$ and trivializable over suitable ramified extensions; it is the   absolute normalization of \cite{2019-1}.
\end{rem}

\begin{rem}\label{rem-stack}
The height $\hst$ is attached to the tautological bundle $\cO(1)$ on the weighted projective stack with weights $\w$, in the sense of \cite{ESZB23}. The two definitions agree exactly at the finite places, since $\lceil -\theta_v \rceil = -\lfloor \theta_v \rfloor$; at the Archimedean places the height of \cite{ESZB23} depends on a choice of metric and is defined only up to a bounded function. Under this identification $\h$ is their stable height and $\log \delta_k$ the sum of their local discrepancies, so \cref{prop-1} is the multiplicative form of the decomposition of a stacky height into stable part and local discrepancies. 

For $\w = (4,6)$ the height $\hst$ is the naive height $\max(|A|^{1/4}, |B|^{1/6})$ of an elliptic curve $y^2 = x^3 + Ax + B$; over $\Q$ it goes back to \cite{deng1998} and \cite{2019-1}, as noted in \cite{ESZB23}.
\end{rem}

\subsection{Related Work on Counting Rational Points}

The counting problem for $\hst$ goes back to \cite{deng1998}, where an asymptotic is obtained over an arbitrary number field with respect to a size function. Over $\Q$, for a well-formed weighted projective space, it reads
\[
Z_{\hst}(\WP^n_{\w}(\Q), T) = \frac{2^n}{\zeta(Q)} T^Q + O\left(T^{Q - \min_i q_i}\right),
\]
where $Q = q_0 + \cdots + q_n$; for $\w = (1, \dots, 1)$ this is Schanuel's theorem \cite{schanuel1979} for $\bP^n(\Q)$. Darda \cite{Dar21} recovered the same asymptotic, without error terms, by the method of height zeta functions, within a general treatment of heights on weighted projective stacks; in \cite{ESZB23} the exponent $Q$ appears as an instance of a Batyrev--Manin--Malle conjecture for stacky heights.

Over a number field $K$, the authors of \cite{bruin2023} count points $x \in \WP^n_{\w}(K)$ by the height $\hst(\phi(x))$ of their image under a morphism $\phi \colon \WP^n_{\w} \to \WP^{n'}_{\u}$ homogeneous of degree $e$. Their asymptotic \cite[Thm.~3.15]{bruin2023} is
\[
N_\phi(T) = C T^{Q/e} + O\left(T^{Q/e - \min_i \frac{q_i}{e [K:\Q]}}\right),
\]
where the constant $C$ involves the Dedekind zeta value $\zeta_K(Q)$, the discriminant and roots of unity of $K$, a volume $\Vol_{\Nm}(B_\phi(1))$, and a local factor $C_K^\phi$ given by a sum over ideals. For $K = \Q$ and $\phi$ the identity this recovers the asymptotic of \cite{deng1998}.

All of these results concern the tautological height $\hst$. Asymptotics for the weighted height $\h$, and for points of fixed degree $e$ in the sense of Schmidt \cite{schmidt1995} and Widmer \cite{widmer2009}, are not available in the literature; they are the subject of \cref{sec-3}, \cref{sec-4}, and \cref{sec-5}, where the defect identity of \cref{prop-1} connects the two counting problems.

\section{Counting Rational Points in Weighted Projective Spaces}\label{sec-3}

In this section we prove asymptotic formulas for the number of rational points of bounded height on $\WP^n_{\w}(k)$, for both heights introduced in \cref{sec-2}. For the tautological height $\hst$ we extend Schanuel's classical result~\cite{schanuel1979} to weighted projective spaces over an arbitrary number field $k$, with a constant that specializes to Schanuel's $S_k(n)$ of \cref{eq-schanuel-const} when $\w = (1, \dots, 1)$; over $\Q$ this recovers the asymptotic of \cite{deng1998}. 

For the weighted height $\h$ we prove the first asymptotic in the literature, for the family of weights $(1, q, \dots, q)$, exhibiting the larger exponent $q(n+1)$ predicted by the pullback identity \cref{eq-h-pullback}. 
In \cref{thm-1} and \cref{thm-2} the weights are well-formed; the chart family $(1, q, \dots, q)$ of \cref{subsec-hcount} is not well-formed for $q \geq 2$, a distinction that becomes central in \cref{sec-5}. Over a number field $k$ of degree $m$ we use the absolute normalization: $Z_{\hst}(\WP^n_{\w}(k), X)$ counts the points $\p \in \WP^n_{\w}(k)$ with $\hst_k(\p) \leq X^m$, so that for $k = \Q$ the two normalizations agree, exactly as for the Weil height \cref{eq-weil}.

\subsection{The Rational Case}

We first consider $\WP^n_{\w}(\Q)$, where the counting reduces to lattice points in $\Z^{n+1}$ and the argument is due in essence to \cite{deng1998}; we include the short proof both for completeness and because it is the model for the general case.

\begin{thm}\label{thm-1}
Let $n \geq 1$ and let $\w = (q_0, \dots, q_n)$ satisfy $\gcd(q_0, \dots, q_n) = 1$, with $\w \neq (1,1)$. Then
\[
Z_{\hst}(\WP^n_{\w}(\Q), X) = \frac{2^n}{\zeta(Q)} X^Q + O\left(X^{Q - \min_i q_i}\right),
\]
where $Q = q_0 + \cdots + q_n$ and $\zeta$ is the Riemann zeta function.
\end{thm}

\begin{proof}
Every point $\p \in \WP^n_{\w}(\Q)$ has an integral representative with $\wgcd = 1$: clear denominators to obtain $\x \in \Z^{n+1}$, and replace $\x$ by $\wgcd(\x)^{-1} \star \x$. Let $\p$ have all coordinates nonzero, and let $\x, \x' \in \Z^{n+1}$ be two such representatives, say $\x' = \lambda \star \x$ with $\lambda \in \bar{\Q}^\times$. Then $\lambda^{q_i} = x_i'/x_i \in \Q^\times$ for every $i$, and writing $1 = \sum_i a_i q_i$ with $a_i \in \Z$, possible since $\gcd(q_0, \dots, q_n) = 1$, gives
\[
\lambda = \prod_{i=0}^n \left( \lambda^{q_i} \right)^{a_i} \in \Q^\times .
\]
By the proof of \cref{lem-hst}~i) we have $\fd_\Q(\x') = \fd_\Q(\x) \cdot (\lambda)$, so $(\lambda) = (1)$ and $\lambda = \pm 1$. The action of $-1$ on such tuples is free: a fixed tuple would satisfy $(-1)^{q_i} x_i = x_i$ with $x_i \neq 0$ for every $i$, forcing every $q_i$ to be even, contrary to $\gcd(q_0, \dots, q_n) = 1$. For such a representative $\hst(\p) = \max_i |x_i|^{1/q_i}$ by \cref{lem-hst}~iii).

The remaining points have a vanishing coordinate; their normalized representatives lie in the subspaces $\{x_i = 0\}$ and number
\[
\ll \sum_{i=0}^{n} \prod_{j \neq i} X^{q_j} \ll X^{Q - \min_i q_i} .
\]
Hence
\[
Z_{\hst}(\WP^n_{\w}(\Q), X) = \frac{1}{2} M^*(X, 1) + O\left(X^{Q - \min_i q_i}\right),
\]
where for $d \geq 1$ we set
\[
\begin{split}
M(X) & = \# \{ \x \in \Z^{n+1} \mid \x \neq 0 \text{ and } |x_i| \leq X^{q_i} \text{ for all } i \}, \\
M^*(X, d) & = \# \{ \x \in \Z^{n+1} \mid \wgcd(\x) = d \text{ and } |x_i| \leq X^{q_i} \text{ for all } i \}.
\end{split}
\]
Since $\wgcd(d \star \x) = d \cdot \wgcd(\x)$, we have $M^*(X, d) = M^*(X/d, 1)$, hence 
\[
M(X) = \sum_{d \geq 1} M^*(X/d, 1)
\]
 and by M\"obius inversion 
 \[
 M^*(X, 1) = \sum_{d \geq 1} \mu(d) M(X/d).
 \]
  The sum is finite: $M(X/d) = 0$ once $d > X$, since then $|x_i| \leq (X/d)^{q_i} < 1$ forces $\x = 0$. Counting lattice points in the box directly,
\[
M(X) = \prod_{i=0}^n \left(2 \lfloor X^{q_i} \rfloor + 1\right) - 1 = 2^{n+1} X^Q + O\left(X^{Q - \min_i q_i}\right).
\]
Substituting this expansion into $M^*(X,1) = \sum_{1 \leq d \leq X} \mu(d) M(X/d)$ separates a main term
\[
2^{n+1} X^Q \sum_{1 \leq d \leq X} \mu(d) d^{-Q} = 2^{n+1} X^Q \left( \frac{1}{\zeta(Q)} + O\left(X^{-(Q-1)}\right) \right),
\]
using $\left| \sum_{d > X} \mu(d) d^{-Q} \right| \leq \sum_{d > X} d^{-Q} \ll X^{-(Q-1)}$, from an error term
\[
\sum_{1 \leq d \leq X} |\mu(d)| (X/d)^{Q - \min_i q_i} \ll X^{Q - \min_i q_i} \sum_{d \geq 1} d^{-(Q - \min_i q_i)} \ll X^{Q - \min_i q_i},
\]
the sum converging since $Q - \min_i q_i = \sum_{j \neq i_0} q_j > 1$, where $i_0$ attains the minimum: this sum has $n$ terms, so it exceeds $1$ for $n \geq 2$, while for $n = 1$ it equals $\max(q_0, q_1) > 1$ since $\w \neq (1,1)$. Thus
\[
M^*(X,1) = \frac{2^{n+1}}{\zeta(Q)} X^Q + O\left(X^{Q - \min_i q_i}\right),
\]
the contribution $O(X)$ of the tail of the main term being absorbed since $Q - \min_i q_i > 1$; dividing by $2$ yields the theorem.
\end{proof}

\begin{rem}\label{rem-thm1}
For $\w = (1, \dots, 1)$ with $n \geq 2$ this is Schanuel's theorem over $\Q$, with constant $2^n / \zeta(n+1) = S_\Q(n)$. The hypothesis $\gcd(q_0, \dots, q_n) = 1$ is necessary for the constant: if every $q_i$ is even, then $-1$ acts trivially on normalized tuples and the factor $\tfrac{1}{2}$ must be removed.
\end{rem}

\subsection{Extension to a General Number Field}

Over a number field the role of the normalization $\wgcd = 1$ is played by the content ideal $\fd_k$ of \cref{eq-content}, and the class group and unit group of $k$ enter the count exactly as in \cite{schanuel1979}.

\begin{thm}\label{thm-2}
Let $n \geq 1$, let $\w = (q_0, \dots, q_n)$ satisfy $\gcd(q_0, \dots, q_n) = 1$, and let $k$ be a number field of degree $m$ with $r_k$ real and $s_k$ complex places, where $Q > 1 + 1/m$. Then
\[
Z_{\hst}(\WP^n_{\w}(k), X) = \frac{h_k R_k \, Q^{\, r_k + s_k - 1}}{w_k \, \zeta_k(Q)} \left( \frac{2^{r_k} (2\pi)^{s_k}}{\sqrt{|\Delta_k|}} \right)^{n+1} X^{m Q} + O\left( X^{m Q - 1} \log X \right).
\]
\end{thm}

\begin{proof}
Write $r = r_k$, $s = s_k$, and $T = X^m$, so that the condition is $\hst_k(\p) \leq T$. We follow Schanuel's argument for $\bP^n(k)$, modified in three places: the parametrization of points uses the content ideal, the M\"obius inversion runs over the lattices $\fa^{q_0} \oplus \cdots \oplus \fa^{q_n}$, and the radial integral in the volume computation has homogeneity degree $Q$ in place of $n+1$.

For $\p \in \WP^n_{\w}(k)$ with representative $\x$, the ideal class $c(\p) = [\fd_k(\x)] \in \mathrm{Cl}(k)$ is well defined, since $\fd_k(\lambda \star \x) = \fd_k(\x) \cdot (\lambda)$ by the proof of \cref{lem-hst}~i). Fix an integral ideal $\fa$ in each class. If $c(\p) = [\fa]$, then $\p$ has a representative $\x$ with $\fd_k(\x) = \fa$. Suppose $\p$ has all coordinates nonzero, and let $\x' = \lambda \star \x$ be a second such representative, with $\lambda \in \bar{k}^\times$. Then $\lambda^{q_i} = x_i'/x_i \in k^\times$ for every $i$, and writing $1 = \sum_i a_i q_i$ with $a_i \in \Z$, possible since $\gcd(q_0, \dots, q_n) = 1$, gives
\[
\lambda = \prod_{i=0}^n \left( \lambda^{q_i} \right)^{a_i} \in k^\times ;
\]
moreover $\fd_k(\x') = \fd_k(\x) \cdot (\lambda)$ forces $(\lambda) = (1)$, so $\lambda \in U_k$. The action of $U_k$ on such tuples is free: $\lambda \star \x = \x$ with every $x_i \neq 0$ gives $\lambda^{q_i} = 1$ for all $i$, hence $\lambda^{\gcd(q_0, \dots, q_n)} = \lambda = 1$. Points with a vanishing coordinate lie in the finitely many subspaces $\{ x_i = 0 \}$, on which the same analysis applies with $Q$ replaced by $Q - q_i$; they contribute $O(T^{Q - \min_i q_i})$, absorbed in the error term. For a representative with $\fd_k(\x) = \fa$ we have, by \cref{eq-hst},
\[
\hst_k(\p) = \Nm(\fa)^{-1} H_\infty(\x), \qquad H_\infty(\x) = \prod_{v \in M_k^\infty} \max_i |x_i|_v^{n_v / q_i},
\]
so the condition $\hst_k(\p) \leq T$ becomes $H_\infty(\x) \leq \Nm(\fa) T$.

For an integral ideal $\fb$, the condition $\fa\fb \mid \fd_k(\x)$ means $v(x_i) \geq q_i \, v(\fa\fb)$ for all $i$ and all finite $v$, that is,
\[
\x \in \Lambda_{\fa\fb} := (\fa\fb)^{q_0} \oplus \cdots \oplus (\fa\fb)^{q_n},
\]
a full lattice in $k_\infty^{n+1} = (\R^{r} \times \C^{s})^{n+1}$ of covolume $\Nm(\fa\fb)^{Q} ( 2^{-s} \sqrt{|\Delta_k|} )^{n+1}$. Writing $N(\fa, T)$ for the number of $U_k$-orbits of tuples with $\fd_k(\x) = \fa$ and $H_\infty(\x) \leq \Nm(\fa) T$, M\"obius inversion over $\fb$ gives
\[
N(\fa, T) = \sum_{\fb} \mu(\fb) \, N_{\Lambda_{\fa\fb}}(\Nm(\fa) T),
\]
where $N_{\Lambda}(T')$ counts $U_k$-orbits of nonzero lattice points of $\Lambda$ with $H_\infty \leq T'$. The sum is finite: for nonzero $\x \in \Lambda_{\fa\fb}$, choosing $i$ with $x_i \neq 0$ gives $|\Nm(x_i)| \geq \Nm(\fa\fb)^{q_i}$, whence $H_\infty(\x) \geq |\Nm(x_i)|^{1/q_i} \geq \Nm(\fa\fb)$, so $N_{\Lambda_{\fa\fb}}(\Nm(\fa) T) = 0$ once $\Nm(\fb) > T$.

A fundamental domain for the free part of $U_k$ acting on $\{ \x \in k_\infty^{n+1} : H_\infty(\x) \leq T' \}$ is constructed as in \cite{schanuel1979}, via the logarithm map
\[
\x \mapsto (\ell_v)_v \in \R^{r+s}, \qquad \ell_v = \log \max_i |x_i|_v^{n_v/q_i},
\]
on which a unit $\lambda$ acts by translation by $(n_v \log |\lambda|_v)_v$, a lattice of covolume proportional to $R_k$ in the hyperplane $\sum_v \ell_v = 0$; here $H_\infty$ is $U_k$-invariant, since $\prod_{v \mid \infty} |\lambda|_v^{n_v} = 1$ for units. The weights enter only through the pushforward of Lebesgue measure under this map: at a real place the set $\{ y \in \R^{n+1} : \max_i |y_i|^{1/q_i} \leq e^{\ell_v} \}$ has volume $2^{n+1} e^{Q \ell_v}$, and at a complex place the corresponding set in $\C^{n+1}$ has volume $\pi^{n+1} e^{Q \ell_v}$; for $\w = (1, \dots, 1)$ both exponents are $(n+1) \ell_v$. Carrying Schanuel's computation through with this change,
\[
\Vol(F_{T'}) = Q^{\, r+s-1} R_k \, 2^{r(n+1)} \pi^{s(n+1)} \, T'^{\, Q},
\]
the factor $Q^{r+s-1}$ arising as $(n+1)^{r+s-1}$ does in \cite{schanuel1979}: the radial integral contributes $T'^{Q}/Q$ against a density carrying $Q^{r+s}$. The domain is Lipschitz-parametrizable as in \cite{schanuel1979}, so the number of lattice points of $\Lambda$ in $F_{T'}$ is $\Vol(F_{T'}) / \mathrm{covol}(\Lambda) + O(T'^{\, Q - 1/m} \log T')$, and dividing by the order $w_k$ of the group of roots of unity, which acts freely on tuples with nonzero coordinates, gives $N_\Lambda(T')$.

This error term must be taken uniformly in $\fb$, and the weighted scaling supplies the uniformity. For $t > 0$ the map $t \star \x$ commutes with $U_k$ and satisfies $H_\infty(t \star \x) = t^{m} H_\infty(\x)$. Take $t = \Nm(\fa\fb)^{-1/m}$ and set $\fc_i = (\fa\fb)^{q_i}$, so that the $i$-th summand of $t \star \Lambda_{\fa\fb}$ is $\Nm(\fc_i)^{-1/m} \fc_i$, of covolume $2^{-s} \sqrt{|\Delta_k|}$. Every nonzero $\alpha \in \fc_i$ has $|\Nm(\alpha)| \geq \Nm(\fc_i)$, hence $\| \alpha \| \gg \Nm(\fc_i)^{1/m}$, so the first minimum of $\Nm(\fc_i)^{-1/m} \fc_i$ is $\gg 1$; as its covolume is bounded, all its successive minima are $\asymp 1$, with implied constants depending only on $k$. The lattices $t \star \Lambda_{\fa\fb}$ thus form a family of fixed covolume with successive minima $\asymp 1$, over which the error constant above may be taken uniform. Since $N_{\Lambda_{\fa\fb}}(\Nm(\fa) T) = N_{t \star \Lambda_{\fa\fb}}( T / \Nm(\fb) )$,
\[
N_{\Lambda_{\fa\fb}}(\Nm(\fa) T) = \frac{Q^{r+s-1} R_k \, 2^{r(n+1)} \pi^{s(n+1)}}{w_k \left( 2^{-s} \sqrt{|\Delta_k|} \right)^{n+1}} \left( \frac{T}{\Nm(\fb)} \right)^{Q} + O\left( \left( \frac{T}{\Nm(\fb)} \right)^{Q - 1/m} \log T \right),
\]
the implied constant depending only on $k$, $n$ and $\w$.

Summing over the $\fb$ with $\Nm(\fb) \leq T$, the factor $\Nm(\fa)^{Q}$ from the threshold cancels against the covolume, so that each ideal class contributes equally, and
\[
\begin{split}
N(\fa, T) & = \frac{Q^{r+s-1} R_k \, 2^{r(n+1)} \pi^{s(n+1)}}{w_k \left( 2^{-s} \sqrt{|\Delta_k|} \right)^{n+1}} \, T^{Q} \sum_{\Nm(\fb) \leq T} \mu(\fb) \Nm(\fb)^{-Q} \\
& \qquad + O\left( T^{Q - 1/m} \log T \sum_{\fb} \Nm(\fb)^{-(Q - 1/m)} \right) .
\end{split}
\]
The error sum converges since $Q - 1/m > 1$, and completing the main sum to all $\fb$ costs $O(T^{1 + \varepsilon})$ for every $\varepsilon > 0$, absorbed for the same reason. Since $\sum_{\fb} \mu(\fb) \Nm(\fb)^{-Q} = \zeta_k(Q)^{-1}$ and
\[
2^{r(n+1)} \pi^{s(n+1)} 2^{s(n+1)} = \left( 2^{r} (2\pi)^{s} \right)^{n+1},
\]
summing over the $h_k$ ideal classes and substituting $T = X^m$ yields the theorem.
\end{proof} 

\begin{rem}\label{rem-thm2}
For $\w = (1, \dots, 1)$ one has $Q = n+1$ and the constant is exactly Schanuel's $S_k(n)$ of \cref{eq-schanuel-const}. In general it differs from $S_k(n)$ in exactly two places: the zeta value is taken at $Q$ rather than $n+1$, and the unit-domain factor is $Q^{r+s-1}$ rather than $(n+1)^{r+s-1}$; the weights enter only through $Q$. Over $\Q$ the constant is $2^n / \zeta(Q)$, in agreement with \cref{thm-1}, though with a weaker error term, and, for the identity morphism, with the constant of \cite{bruin2023}. As in \cite{schanuel1979}, the logarithm in the error term can be omitted except in low-dimensional cases.
\end{rem}

\subsection{Counting for the weighted height}\label{subsec-hcount}

We now turn to the weighted height $\h$. By the pullback identity \cref{eq-h-pullback}, the condition $\h(\p) \leq X$ is the condition $H(\phi(\p)) \leq X^q$ on the image, so $Z_{\h}$ counts rational points of $\bP^n(\Q)$ of Weil height at most $X^q$ lying in the image of $\phi$, weighted by the number of their rational preimages. The image is a proper subset of $\bP^n(\Q)$ in general, governed by the local solvability of the system $x_i^{q/q_i} = \lambda y_i$, which we study in \cref{sec-5}; here we treat the family of weights for which the fiber count is trivial and the asymptotic follows from Schanuel's theorem alone, without the analytic apparatus of \cref{sec-main}.

\begin{prop}\label{prop-hcount}
Let $n \geq 1$, $q \geq 2$, and $\w = (1, q, \dots, q)$. Then
\[
Z_{\h}(\WP^n_{\w}(\Q), X) = \frac{2^n}{\zeta(n+1)} X^{q(n+1)} + O\left( X^{qn} \log X \right).
\]
\end{prop}

\begin{proof}
Since $q_0 = 1$, every point with $x_0 \neq 0$ has a unique representative of the form $[1 : u_1 : \cdots : u_n]$, obtained by acting with $\lambda = 1/x_0$; and the representative is unique because $\mu \star [1 : \u] = [1 : \u']$ forces $\mu^{q_0} = \mu = 1$. This identifies $\{ \p : x_0 \neq 0 \}$ with $\Q^n$. For such a point,
\[
\h(\p) = \prod_{v \in M_\Q} \max \left( 1, |u_1|_v^{1/q}, \dots, |u_n|_v^{1/q} \right) = H_{\mathrm{aff}}(\u)^{1/q},
\]
where $H_{\mathrm{aff}}$ is the affine Weil height on $\Q^n$. Hence the points of the chart with $\h \leq X$ are the $\u \in \Q^n$ with $H_{\mathrm{aff}}(\u) \leq X^q$, and by Schanuel's theorem applied to $\bP^n(\Q)$ and the hyperplane $y_0 = 0$,
\[
\# \{ \u \in \Q^n : H_{\mathrm{aff}}(\u) \leq B \} = \frac{2^n}{\zeta(n+1)} B^{n+1} + O\left( B^{n} \log B \right).
\]
The remaining points form the locus $x_0 = 0$, which is $\WP^{n-1}_{(q, \dots, q)} \cong \bP^{n-1}$ with $\h = H^{1/q}$, contributing $O(X^{qn})$. Setting $B = X^q$ gives the result.
\end{proof}

\begin{exa}\label{exa-3}
For $\w = (1,2)$, so $Q = 3$ and $q = 2$, the two counting functions grow at different rates:
\[
\begin{split}
Z_{\hst}(\WP^1_{(1,2)}(\Q), X) 	& 	= \frac{2}{\zeta(3)} X^{3} + O(X^2), \\
Z_{\h}(\WP^1_{(1,2)}(\Q), X) 	&	= \frac{12}{\pi^2} X^{4} + O\left( X^{2} \log X \right),
\end{split}
\]
by \cref{thm-1} and \cref{prop-hcount}. The discrepancy in the exponents is the counting-theoretic form of the defect identity of \cref{prop-1}: the points with $\h(\p) \leq X < \hst(\p)$ are exactly those with large defect.

On the image side, $\phi([x_0 : x_1]) = [x_0^2 : x_1]$ is surjective on $\Q$-points here, and in fact the same holds for every weighted projective line, since the exponents $q/q_0$ and $q/q_1$ are always coprime. For $n \geq 2$ the image can omit rational points: for $\w = (2,3,5)$, with $\phi = [x_0^{15} : x_1^{10} : x_2^{6}]$, the point $[1 : 2 : 1] \in \bP^2(\Q)$ has no rational preimage, since $x_0^{15} = \lambda$ and $x_2^{6} = \lambda$ force $v_2(\lambda) \equiv 0 \pmod{30}$, while $x_1^{10} = 2\lambda$ forces $10 \, v_2(x_1) = 1 + v_2(\lambda)$, an odd number. The precise solvability criterion, and the density of the image, are the subject of \cref{sec-5}.
\qed
\end{exa}

For general weights the exponent of $Z_{\h}$ is governed by the Kummer conditions on the image of $\phi$ and by the distribution of the defect $\delta$; it is determined by a linear program over the lifting set introduced in \cref{sec-5}, and can exceed $Q$. This is the subject of \cref{sec-main}, and the case of points of higher degree in \cref{sec-4}.

\section{Points of Fixed Degree}\label{sec-4}

In this section we count the points of degree $e$ over $k$, that is, the sets $\WP^n_{\w}(k;e)$ of \cref{sec-2}, with respect to the weighted height $\h$. For projective space the problem was solved by Schmidt \cite{schmidt1995} for $e = 2$ and by Widmer \cite{widmer2009} in general: for $n > 5e/2 + 4 + 2/(me)$,
\begin{equation}\label{eq-widmer}
\begin{split}
Z_H(\bP^n(k;e), X) 	&	= D_e(n) \, X^{m e (n+1)} + O\left( X^{m e (n+1) - 1} \log X \right), \\
D_e(n) 			&	= \sum_{K \in \cC_e} S_K(n),
\end{split} 
\end{equation}
where $S_K(n)$ is Schanuel's constant \cref{eq-schanuel-const} for the field $K$, and the sum converges under the stated hypothesis.

For a general weighted projective space the problem is open, and two obstacles are worth recording. The machinery of \cite{widmer2009} rests on Lipschitz parametrizations of regions cut out by norms, which scale linearly, whereas the regions $\max_i |x_i|_v^{1/q_i} \leq T$ scale by the weighted action; adapting it is a genuine project. Beyond that, any candidate for the leading constant must retain the decay $|\Delta_K|^{-(n+1)/2}$ present in $S_K(n)$: by the theorem of Siegel--Brauer the invariants $h_K R_K$ grow with the discriminant, so a constant of the shape $\sum_K h_K R_K \cdot (\text{bounded})$ diverges, and the sum over $\cC_e$ converges only because the discriminant powers do not cancel.

Both obstacles are absent when $\phi$ is a bijection on points, and this happens precisely for the presentations whose exponents are pairwise coprime. There the count transports verbatim from $\bP^n$, and we obtain the first asymptotic for points of fixed degree on a weighted projective space.

\begin{lem}\label{lem-pc-bij}
Suppose the exponents $n_i = q/q_i$ are pairwise coprime. Then $\phi \colon \WP^n_{\w}(\bar{k}) \to \bP^n(\bar{k})$ is a bijection, and it restricts to a bijection $\WP^n_{\w}(K) \to \bP^n(K)$ for every field $K$ with $k \subseteq K \subseteq \bar{k}$.
\end{lem}

\begin{proof}
The morphism $\phi$ preserves the support $S = \{ i : x_i \neq 0 \}$ of a point, so we may argue one support at a time.

For surjectivity, let $y = [y_0 : \cdots : y_n] \in \bP^n(K)$ have support $S$, with $y_i \in K$. Since the $n_i$ with $i \in S$ are pairwise coprime, the Chinese remainder theorem provides, for each $j \in S$, an integer $c_j$ with $c_j \equiv -1 \pmod{n_j}$ and $c_j \equiv 0 \pmod{n_i}$ for every $i \in S \setminus \{j\}$. Put $\lambda = \prod_{j \in S} y_j^{c_j} \in K^\times$. Fix $i \in S$; in the monomial $\lambda y_i$ the exponent of $y_i$ is $c_i + 1$ and the exponent of $y_j$ is $c_j$ for $j \neq i$, and all of them are divisible by $n_i$. Hence $\lambda y_i = x_i^{n_i}$ for some $x_i \in K^\times$, and setting $x_i = 0$ for $i \notin S$ produces $\p = [x_0 : \cdots : x_n] \in \WP^n_{\w}(K)$ with $\phi(\p) = [\lambda y_0 : \cdots : \lambda y_n] = y$.

For injectivity, let $\p = [x_0 : \cdots : x_n]$ and $\p' = [x_0' : \cdots : x_n']$ in $\WP^n_{\w}(\bar{k})$ have $\phi(\p) = \phi(\p')$. They have the same support $S$, and there is $c \in \bar{k}^\times$ with $x_i'^{\,n_i} = c \, x_i^{n_i}$ for all $i$. Fix $\lambda \in \bar{k}^\times$ with $\lambda^q = c$. For $i \in S$ the ratio $\zeta_i = x_i'/x_i$ satisfies $\zeta_i^{n_i} = c = (\lambda^{q_i})^{n_i}$, so $\xi_i = \zeta_i \lambda^{-q_i}$ lies in $\mu_{n_i}(\bar{k})$. Consider
\[
\Psi \colon \mu_q(\bar{k}) \longrightarrow \prod_{i \in S} \mu_{n_i}(\bar{k}), \qquad \nu \longmapsto \left( \nu^{q_i} \right)_{i \in S},
\]
well defined because $(\nu^{q_i})^{n_i} = \nu^q = 1$. A generator $\zeta$ of $\mu_q(\bar{k})$ has $\zeta^{q_i}$ of exact order $q/q_i = n_i$, so $\Psi(\zeta)$ has order $\lcm_{i \in S} n_i = \prod_{i \in S} n_i$; as the target is cyclic of that same order, $\Psi$ is surjective. Choose $\nu \in \mu_q(\bar{k})$ with $\Psi(\nu) = (\xi_i)_{i \in S}$. Then $(\lambda\nu)^{q_i} x_i = \lambda^{q_i} \xi_i x_i = \zeta_i x_i = x_i'$ for $i \in S$, while both sides vanish for $i \notin S$, so $\p' = (\lambda\nu) \star \p = \p$.
\end{proof}

\begin{lem}\label{lem-pc-bij}
Suppose the exponents $n_i = q/q_i$ are pairwise coprime. Then the following hold:
\begin{enumerate}
\item[i)] for every field $K$ with $k \subseteq K \subseteq \bar{k}$, the map $\phi \colon \WP^n_{\w}(K) \to \bP^n(K)$ is a bijection;
\item[ii)] every $\p \in \WP^n_{\w}(\bar{k})$ has a representative with coordinates in $k(\p)$; equivalently, $\WP^n_{\w}(K/k) = \{ \p \in \WP^n_{\w}(\bar{k}) : k(\p) = K \}$ for every $K$ with $k \subseteq K \subseteq \bar{k}$.
\end{enumerate}
\end{lem}

\begin{proof}
The morphism $\phi$ preserves the support $S = \{ i : x_i \neq 0 \}$ of a point, since $x_i^{n_i} = 0$ if and only if $x_i = 0$, and if the $n_i$ are pairwise coprime then so are the $n_i$ with $i \in S$ for any $S$. We argue one support at a time.

We first prove that $\phi$ is injective on $\WP^n_{\w}(\bar{k})$, which gives the injectivity in i) for every $K$ at once. Let $\p = [x_0 : \cdots : x_n]$ and $\p' = [x_0' : \cdots : x_n']$ have $\phi(\p) = \phi(\p')$. They have the same support $S$, and there is $c \in \bar{k}^\times$ with $x_i'^{\,n_i} = c \, x_i^{n_i}$ for every $i$. Fix $\lambda \in \bar{k}^\times$ with $\lambda^q = c$. For $i \in S$ the ratio $\zeta_i = x_i'/x_i$ satisfies $\zeta_i^{n_i} = c$, and $(\lambda^{q_i})^{n_i} = \lambda^q = c$ as well, so $\xi_i = \zeta_i \lambda^{-q_i}$ lies in $\mu_{n_i}(\bar{k})$. Consider the homomorphism
\[
\Psi \colon \mu_q(\bar{k}) \longrightarrow \prod_{i \in S} \mu_{n_i}(\bar{k}), \qquad \nu \longmapsto \left( \nu^{q_i} \right)_{i \in S},
\]
well defined because $(\nu^{q_i})^{n_i} = \nu^q = 1$. If $\zeta$ generates the cyclic group $\mu_q(\bar{k})$, then $\zeta^{q_i}$ has exact order $q/q_i = n_i$, so $\Psi(\zeta)$ has order $\lcm_{i \in S} n_i = \prod_{i \in S} n_i$; the target is a product of cyclic groups of pairwise coprime orders, hence cyclic of that same order, so $\Psi(\zeta)$ generates it and $\Psi$ is surjective. Choose $\nu \in \mu_q(\bar{k})$ with $\Psi(\nu) = (\xi_i)_{i \in S}$. Then $(\lambda\nu)^{q_i} x_i = \lambda^{q_i} \xi_i x_i = \zeta_i x_i = x_i'$ for $i \in S$, while both sides vanish for $i \notin S$, so $\p' = (\lambda\nu) \star \p = \p$.

For the surjectivity in i), let $y = [y_0 : \cdots : y_n] \in \bP^n(K)$ have support $S$, with coordinates $y_i \in K$. Since the $n_j$ with $j \in S$ are pairwise coprime, the Chinese remainder theorem provides, for each $j \in S$, an integer $c_j$ with $c_j \equiv -1 \pmod{n_j}$ and $c_j \equiv 0 \pmod{n_i}$ for every $i \in S \setminus \{j\}$. Put $\lambda = \prod_{j \in S} y_j^{c_j} \in K^\times$ and fix $i \in S$. In the monomial $\lambda y_i$ the exponent of $y_i$ is $c_i + 1$ and the exponent of $y_j$ is $c_j$ for $j \in S \setminus \{i\}$, and every one of them is divisible by $n_i$; hence $\lambda y_i = x_i^{n_i}$ for an explicit $x_i \in K^\times$. Setting $x_i = 0$ for $i \notin S$ gives $\p = [x_0 : \cdots : x_n] \in \WP^n_{\w}(K)$ with $\phi(\p) = [\lambda y_0 : \cdots : \lambda y_n] = y$.

For ii), let $\p \in \WP^n_{\w}(\bar{k})$ and put $K = k(\p)$. By the definition of the field of definition in \cref{sec-2}, $K$ is the field of definition of $\phi(\p)$ on $\bP^n$, so $\phi(\p) \in \bP^n(K)$. By the surjectivity in i) over $K$ there is $\p' \in \WP^n_{\w}(K)$ with $\phi(\p') = \phi(\p)$, and by the injectivity over $\bar{k}$ we get $\p' = \p$. Hence $\p \in \WP^n_{\w}(K)$, and $\WP^n_{\w}(K/k)$ of \cref{eq-primitive} is cut out inside $\WP^n_{\w}(\bar{k})$ by the condition $k(\p) = K$ alone.
\end{proof}

\begin{thm}\label{thm-3}
Suppose the exponents $n_i = q/q_i$ are pairwise coprime, and let $k$ be a number field of degree $m$. For every $e \geq 1$ the Veronese morphism restricts to a bijection
\[
\phi \colon \WP^n_{\w}(k;e) \longrightarrow \bP^n(k;e), \qquad \h(\p)^q = H(\phi(\p)),
\]
and consequently $Z_{\h}\left( \WP^n_{\w}(k;e), X \right) = Z_H\left( \bP^n(k;e), X^q \right)$ for every $X \geq 1$. In particular, for $e \geq 2$ and $n > 5e/2 + 4 + 2/(me)$,
\[
Z_{\h}\left( \WP^n_{\w}(k;e), X \right) = D_e(n) \, X^{q m e (n+1)} + O\left( X^{q m e (n+1) - q} \log X \right),
\]
with $D_e(n) = \sum_{K \in \cC_e} S_K(n)$ as in \cref{eq-widmer}.
\end{thm}

\begin{proof}
Write $\phi$ for the map on $\bar{k}$-points, a bijection $\WP^n_{\w}(\bar{k}) \to \bP^n(\bar{k})$ by \cref{lem-pc-bij} i) with $K = \bar{k}$.

Degrees are preserved. By the definition of the field of definition in \cref{sec-2}, $k(\p)$ is generated over $k$ by the ratios $x_i^{n_i} / x_j^{n_j}$ with $x_j \neq 0$, which are the ratios of the coordinates of $\phi(\p)$; that is, $k(\p) = k(\phi(\p))$, and this is a definition rather than an assertion. Consequently $\phi$ carries $\WP^n_{\w}(k;e)$ into $\bP^n(k;e)$, and it carries it onto: given $y \in \bP^n(k;e)$, surjectivity over $\bar{k}$ produces $\p \in \WP^n_{\w}(\bar{k})$ with $\phi(\p) = y$, and then $[k(\p) : k] = [k(y) : k] = e$, so $\p$ lies in $\WP^n_{\w}(k;e)$. Injectivity on $\WP^n_{\w}(k;e)$ is inherited from injectivity on $\WP^n_{\w}(\bar{k})$. The restriction is therefore a bijection.

Heights are matched. Both $\h$ and $H$ are absolutely normalized, so \cref{eq-h-pullback} reads $\h(\p) = H(\phi(\p))^{1/q}$ for every $\p \in \WP^n_{\w}(\bar{k})$, which is the asserted identity $\h(\p)^q = H(\phi(\p))$. Since $t \mapsto t^q$ is increasing on $[1,\infty)$, we have $\h(\p) \leq X$ if and only if $H(\phi(\p)) \leq X^q$.

The two statements together give a bijection between the set of $\p \in \WP^n_{\w}(k;e)$ with $\h(\p) \leq X$ and the set of $y \in \bP^n(k;e)$ with $H(y) \leq X^q$. Both are finite, the first by \cref{prop-northcott} and the second by Northcott's theorem, and the identity of counting functions is the equality of their cardinalities.

For the asymptotic, apply \cref{eq-widmer} at the threshold $X^q$, legitimate since $e \geq 2$ and $n > 5e/2 + 4 + 2/(me)$:
\[
\begin{split}
Z_{\h}\left( \WP^n_{\w}(k;e), X \right) 	&	= Z_H\left( \bP^n(k;e), X^q \right) \\
											&	= D_e(n) \left( X^q \right)^{m e (n+1)} + O\left( \left( X^q \right)^{m e (n+1) - 1} \log X^q \right) \\
											&	= D_e(n) \, X^{q m e (n+1)} + O\left( X^{q m e (n+1) - q} \log X \right),
\end{split}
\]
the last line because $\log X^q = q \log X$ and $q$ is absorbed into the implied constant.
\end{proof}

\begin{rem}\label{rem-fod}
Part ii) of \cref{lem-pc-bij} is what entitles $k(\p)$ to the name it carries: it is not merely the field generated by the invariants of $\p$, but the smallest field over which $\p$ admits a representative, and $\WP^n_{\w}(k;e)$ decomposes as in \cref{eq-degree-decomp}. This is special to the pairwise coprime case. In general a point may have $k(\p) = k$ while admitting no representative over $k$, which is precisely the failure of surjectivity of $\phi$ on $k$-rational points, and the two notions of field of definition part company; the discrepancy is measured by the Kummer classes of \cref{sec-5}.
\end{rem}

\begin{rem}\label{rem-thm3}
For $e = 1$ one has $\cC_1 = \{k\}$ and $D_1(n) = S_k(n)$, and the identity of \cref{thm-3} turns Schanuel's theorem \cite{schanuel1979} into
\[
Z_{\h}\left( \WP^n_{\w}(k), X \right) = S_k(n) \, X^{q m (n+1)} + O\left( X^{q m (n+1) - q} \log X \right)
\]
for every $n \geq 1$, with no hypothesis on $n$; over $\Q$ and for $\w = (1, q, \dots, q)$ this is \cref{prop-hcount}, error term included. The hypothesis on $n$ for $e \geq 2$ is inherited from \cref{eq-widmer} and is not expected to be sharp; already for $\bP^n$ the true range of validity of \cref{eq-widmer} is an open problem.
\end{rem}

For general weights both halves of \cref{lem-pc-bij} fail. A point of $\bP^n(K)$ need not lift to $\WP^n_{\w}(K)$ at all, the obstruction being the solvability over $K$ of the system $x_i^{q/q_i} = \lambda y_i$ studied in \cref{sec-5}, and a fiber that does contain one rational point contains several. Over $\Q$ and for $e = 1$ the answer is the main theorem of this paper, \cref{thm:main}: the exponent and the logarithmic power of the counting function are read off a linear program over the set of admissible valuation vectors. The common refinement, degree-$e$ points over a number field for coprime weights, is formulated as \cref{conj-1} in \cref{sec-7}. The corresponding problem for the tautological height $\hst$, namely the extension of \cref{thm-2} to $\WP^n_{\w}(k;e)$, is open as well, and sits naturally within the counting programme for stacky heights of \cite{ESZB23, Dar21}.

\section{Kummer Obstructions and the Lifting Problem}\label{sec-5}

In this section we study the image of the morphism
\[
\phi : \WP^n_{\w}(k) \longrightarrow \bP^n(k)
\]
on rational points.   We interpret the failure of surjectivity of $\phi$ in terms of Kummer torsors under groups of roots of unity, make the obstruction completely explicit over $\Q$, compute the number of rational points in each fiber, and show that whenever $\phi$ fails to be surjective on $\Q$-points, in particular for every well-formed $\w$ with $q \geq 2$, the image has density zero in $\bP^n(\Q)$. The valuation-theoretic form of the criterion, the lifting set \cref{eq-monoid} below, is the combinatorial input for the counting theory of \cref{sec-main}. Throughout this section we write $n_i = q/q_i$ for the exponents of $\phi$; note that $\gcd(n_0, \dots, n_n) = q/\lcm(q_0, \dots, q_n) = 1$.

\subsection{The Lifting Problem and Kummer Torsors}

The morphism
\[
\phi : \WP^n_{\w}(k) \to \bP^n(k), \quad [x_0 : \cdots : x_n] \mapsto \left[ x_0^{n_0} : \cdots : x_n^{n_n} \right],
\]
is finite and dominant but in general not surjective on \(k\)-rational points. For a point \(y = [y_0 : \dots : y_n] \in \bP^n(k)\), a lift to \(\WP^n_{\w}(k)\) requires solving
\[
x_i^{n_i} = \lambda y_i \quad \text{for some } \lambda \in k^\times, \quad i = 0, \dots, n.
\]
This involves extracting \(n_i\)-th roots of \(\lambda y_i\), which may necessitate radical extensions. 
The obstruction can be described in terms of Galois cohomology. Recall that a \emph{torsor} under a group scheme \(G\) over \(k\) is a \(k\)-variety \(V\) equipped with a simply transitive \(G\)-action, such that \(V\) becomes trivial (i.e., isomorphic to \(G\)) after a field extension. Torsors under finite group schemes such as \(\mu_n\) are classified by the first Galois cohomology group \(H^1(k, \mu_n)\).

The Kummer exact sequence 
\[
1 \to \mu_n \to \bG_m \to \bG_m \to 1,
\]
 together with Hilbert's Theorem 90, yields
\[
H^1(k, \mu_{n_i}) \cong k^\times / (k^\times)^{n_i},
\]
so the class of \(\alpha \in k^\times\) in this quotient is precisely the obstruction to \(\alpha\) being an \(n_i\)-th power; when \(k\) contains the \(n_i\)-th roots of unity, these classes moreover correspond to the cyclic radical extensions \(k(\sqrt[n_i]{\alpha})/k\).

A point \(y = [y_0:\dots:y_n] \in \bP^n(k)\) with all \(y_i \neq 0\) admits a preimage under \(\phi\) if and only if there exists \(\lambda \in k^\times\) with \(\lambda y_i \in (k^\times)^{n_i}\) for all \(i\). Rescaling the coordinates of \(y\) by \(c \in k^\times\) translates the tuple of classes \(\big( [y_i] \big)_i \in \prod_i k^\times / (k^\times)^{n_i}\) by the diagonal image of \(c\); hence the class of \(y\) in the quotient of \(\prod_i k^\times/(k^\times)^{n_i}\) by the diagonal image of \(k^\times\) depends only on the point, and \(y\) lifts if and only if that class is trivial.

Geometrically, two preimages of a point $y$ with nonzero coordinates differ by a tuple $(\zeta_0, \dots, \zeta_n)$ with $\zeta_0^{n_0} = \cdots = \zeta_n^{n_n}$, the common value being the ratio of the corresponding scalars $\lambda$, taken modulo the weighted scaling. Hence the fiber \(\phi^{-1}(y)\) is a torsor under the group scheme
\[
G = T / \bG_m , \qquad T = \left\{ (\zeta_0, \dots, \zeta_n) \in \prod_{i} \bG_m \ : \ \zeta_0^{n_0} = \zeta_1^{n_1} = \cdots = \zeta_n^{n_n} \right\},
\]
where $\bG_m$ maps to $T$ by the weighted action $\iota \colon \mu \mapsto (\mu^{q_0}, \dots, \mu^{q_n})$, well defined since $(\mu^{q_i})^{n_i} = \mu^q$ for every $i$, and $G$ denotes the quotient of $T$ by the image of $\iota$. The kernel of $\iota$ is $\mu_d$, where $d = \gcd(q_0, \dots, q_n)$, so $\iota$ is an embedding exactly when the weights are coprime.

The group $G$ is finite of order $d_\phi$, as it must be, since it acts simply transitively on the geometric fiber. Indeed, the common value $(\zeta_i)_i \mapsto \zeta_0^{n_0}$ presents $T$ as an extension of $\bG_m$ by $\prod_i \mu_{n_i}$, so $T$ has dimension one and $\iota(\bG_m)$, connected of dimension one, is the identity component of $T$; hence $G = \pi_0(T)$ is finite, $T = \iota(\bG_m) \cdot \prod_i \mu_{n_i}$, and $G \cong \prod_i \mu_{n_i} / \iota(\mu_q)$. As $\iota(\mu_q)$ has order $q/d$,
\[
\# G = \frac{\prod_{i=0}^{n} n_i}{q/d} = \frac{q^{n+1} / \prod_{i=0}^{n} q_i}{q/d} = \frac{q^n d}{\prod_{i=0}^{n} q_i} = d_\phi ,
\]
the degree of $\phi$ computed in \cref{lem:d_phi_definition}. Nontrivial elements of \(H^1(k, G)\) obstruct the existence of \(k\)-points in the fiber.

\subsection{Analogy with the Brauer--Manin Obstruction}

This descent obstruction parallels the classical Brauer--Manin obstruction to rational points. Let \(X\) be a variety over a number field \(k\), and \(\Br(X)\) its cohomological Brauer group. The Brauer--Manin obstruction arises when a class \(\alpha \in \Br(X)\) evaluates nontrivially at all adelic points, preventing the existence of a global \(k\)-point even if all local conditions are satisfied (see~\cite{skorobogatov2001torsors, poonen2006rational}).

Here, the failure of \(\phi\) to lift rational points in \(\bP^n(k)\) corresponds to the nontriviality of torsors under quotients of the finite group scheme \(\prod_i \mu_{n_i}\), which play a role analogous to elements of \(\Br(X)\). In both cases, the obstruction resides in a first or second Galois cohomology group and reflects the arithmetic failure of a descent problem.

From the stack-theoretic perspective, on the stacky model
\[
[(\A^{n+1} \setminus \{0\}) / \bG_m]
\]
with weights $\w$ the stabilizer of a point with support $S$ is $\mu_{g_S}$, where $g_S = \gcd(q_i : i \in S)$. For well-formed $\w$ with $q \geq 2$ the generic point of a coordinate divisor $\{x_i = 0\}$ has
\[
g_S = \gcd(q_j : j \neq i) = 1,
\]
so the stacky locus has codimension at least two and $\phi$ is neither a coarse space map nor a root construction along the coordinate hyperplanes; it is the finite cover of degree $d_\phi > 1$ whose fibers are the torsors described above, and the arithmetic sparsity is carried by those torsors rather than by stabilizers in codimension one. The chart family is the opposite extreme: for $\w = (1, q, \dots, q)$ every point of $\{x_0 = 0\}$ has stabilizer $\mu_q$ and every point off it has trivial stabilizer, so the stack is the $q$-th root stack of $\bP^n$ along the hyperplane $\{y_0 = 0\}$ and $\phi$ is its coarse space map; consistently, no Kummer condition survives there by \cref{cor:surjectivity}.


\subsection{The Lifting Criterion over \(\Q\)}

Over $\Q$ the Kummer classes are governed by valuations and signs, since 
\[
\Q^\times \cong \{\pm 1\} \times \bigoplus_p \Z,
\]
 and the obstruction becomes completely explicit. Its valuation-theoretic form is recorded by the \emph{lifting set}
\begin{equation}\label{eq-monoid}
M_{\w} = \left\{ t \in \Z_{\geq 0}^{n+1} \ : \ \min_i t_i = 0, \quad t_i \equiv t_j \!\!\!\pmod{\gcd(n_i, n_j)} \ \text{ for all } i, j \right\} .
\end{equation}
The normalization $\min_i t_i = 0$ corresponds to primitivity of an integral representative, and the congruences are the Kummer conditions of the next proposition.

\begin{prop}\label{prop:lifting-criterion}
Let $\w=(q_0,\dots,q_n)$, $n_i = q/q_i$, and $\phi$ as in \cref{Veronese}. A point 
\[
y=[y_0:\dots:y_n]\in \bP^n(\Q)
\]
 with all $y_i \neq 0$ admits a rational preimage under $\phi$ if and only if the following two conditions hold:
 
\begin{enumerate}[label=\roman*)]
    \item for every prime $p$ and all pairs $i,j$,
    \[    v_p(y_i) \equiv v_p(y_j) \pmod{\gcd(n_i, n_j)};    \]
    \item the coordinates $y_i$ for which $n_i$ is even all have the same sign.
\end{enumerate}
Equivalently, for a primitive integral representative, condition i) states that the valuation vector $\big( v_p(y_0), \dots, v_p(y_n) \big)$ lies in $M_{\w}$ for every prime $p$. For points with vanishing coordinates the same criterion applies to the sub-tuple of nonzero coordinates.
\end{prop}

\begin{proof}
A preimage exists if and only if there is $\lambda\in\Q^\times$ with $\lambda y_i \in (\Q^\times)^{n_i}$ for all $i$. An element $x\in\Q^\times$ is an $n_i$-th power if and only if $v_p(x)\equiv 0 \pmod{n_i}$ for every prime $p$, and $x>0$ when $n_i$ is even. The valuation conditions require, at each prime $p$, that 
\[
v_p(\lambda) \equiv -v_p(y_i) \pmod{n_i} \quad \text{ for all } \quad i;
\]
 by the Chinese remainder theorem such a value exists if and only if 
\[
v_p(y_i)\equiv v_p(y_j) \pmod{\gcd(n_i,n_j)}, \quad \text{  for all } \; i,j .
\]
 The choices at distinct primes are independent, and $v_p(\lambda) = 0$ is admissible at every prime where all $v_p(y_i) = 0$, so only finitely many primes are constrained and the prescribed valuations are realized by a rational number. The sign of $\lambda$ must satisfy $\operatorname{sign}(\lambda) = \operatorname{sign}(y_i)$ for every $i$ with $n_i$ even, which is possible if and only if those signs agree. 
 
 Conversely, any $\lambda$ with these valuations and sign realizes the lifting. For a primitive representative $\min_i v_p(y_i) = 0$ at every prime, which is the normalization $\min_i t_i = 0$ in \cref{eq-monoid}. Finally, $x_i^{n_i} = \lambda y_i$ with $\lambda \neq 0$ forces $x_i = 0$ if and only if $y_i = 0$, so a preimage has the same support as $y$ and the argument applies verbatim to the sub-tuple of nonzero coordinates.
\end{proof}

\begin{cor}\label{cor:surjectivity}
The morphism $\phi$ is surjective on $\Q$-points if and only if the exponents $n_0, \dots, n_n$ are pairwise coprime.
\end{cor}

\begin{proof}
If the $n_i$ are pairwise coprime, condition i) of \cref{prop:lifting-criterion} is vacuous; moreover at most one $n_i$ can be even, so condition ii) is vacuous as well, and the same holds on every coordinate stratum. 

Conversely, if $\gcd(n_i, n_j) > 1$ for some $i \neq j$, pick any prime $p$ and let $y$ have $y_i = p$ and all other coordinates $1$; then $v_p(y_i) = 1 \not\equiv 0 = v_p(y_j) \pmod{\gcd(n_i, n_j)}$, so $y$ has no rational preimage.
\end{proof}

For the chart family $\w = (1, q, \dots, q)$ of \cref{prop-hcount} and \cref{thm-3} the exponents are $(q, 1, \dots, 1)$, which are pairwise coprime; \cref{cor:surjectivity} thus explains structurally why no sparsity occurs there. Note that this family is not well-formed for $q \geq 2$. The next lemma shows that this is no accident: for $q \geq 2$, well-formedness is incompatible with pairwise coprimality.

\begin{lem}\label{lem:wf-pairs}
Let $\w$ be well-formed with $q \geq 2$. Then for every prime $\ell \mid q$ there exist indices $i \neq j$ with $\ell \mid \gcd(n_i, n_j)$. In particular the exponents $n_i$ are never pairwise coprime, $\phi$ is not surjective on $\Q$-points, and condition i) of \cref{prop:lifting-criterion} is nontrivial at every rational prime.
\end{lem}

\begin{proof}
Let $a = v_\ell(q) \geq 1$. Since $q_i \mid q$,
\[
v_\ell(n_i) = a - v_\ell(q_i) \geq 0 ,
\]
so $\ell \nmid n_i$ exactly when $v_\ell(q_i) = a$, and then $\ell \mid q_i$ because $a \geq 1$.

Well-formedness caps at $n-1$ the number of weights divisible by $\ell$: a set of $n$ or more indices $j$ with $\ell \mid q_j$ contains $\{0, \dots, n\} \setminus \{i\}$ for some $i$, forcing $\ell \mid \gcd(q_j : j \neq i) = 1$. Hence at most $n-1$ indices satisfy $\ell \nmid n_i$, so at least two of the $n+1$ indices satisfy $\ell \mid n_i$, and any such pair $i \neq j$ has $\ell \mid \gcd(n_i, n_j)$.

For the final statements, $q \geq 2$ supplies a prime $\ell \mid q$, and the resulting pair has $\gcd(n_i, n_j) \geq \ell > 1$: the exponents are not pairwise coprime, $\phi$ is not surjective on $\Q$-points by \cref{cor:surjectivity}, and condition i) of \cref{prop:lifting-criterion} constrains $v_p(y_i)$ and $v_p(y_j)$ modulo $\gcd(n_i, n_j) > 1$ at every rational prime $p$.
\end{proof}

\subsection{Fiber Counts and the Density of the Image}

We next count the rational points in a fiber, which is where the roots of unity of $\Q$ enter: the group $\mu_{n_i}(\Q)$ is $\{\pm 1\}$ or trivial according to the parity of $n_i$.

\begin{lem}\label{lem:fiber-count}
Assume $\gcd(q_0, \dots, q_n) = 1$ and let $y \in \bP^n(\Q)$ with all $y_i \neq 0$ admit a rational preimage. Let $\nu = \#\{ i : n_i \text{ even} \}$. Then the number of rational points in $\phi^{-1}(y)$ is
\[
\# \phi^{-1}(y)( \Q) =
\begin{cases}
2^{\nu - 1}, & q \text{ even}, \\
1, & q \text{ odd}.
\end{cases}
\]
\end{lem}

\begin{proof}
The set of admissible $\lambda$ is a coset of $\bigcap_i (\Q^\times)^{n_i}$, since the ratio of two admissible scalars lies in every $(\Q^\times)^{n_i}$, and
\[
\bigcap_i (\Q^\times)^{n_i} = (\Q^\times)^{\lcm(n_0, \dots, n_n)} = (\Q^\times)^{q} ,
\]
the first equality because a rational number is an $n$-th power exactly when all its valuations are divisible by $n$ and it is positive for $n$ even, while $\lcm(n_0, \dots, n_n)$ is even precisely when some $n_i$ is, and the second because $\lcm_i (q/q_i) = q/\gcd(q_0, \dots, q_n) = q$. 

Replacing a preimage $[x_0 : \dots : x_n]$ by $\mu \star [x_0 : \dots : x_n]$ scales $\lambda$ by $\mu^q$, and $\{ \mu^q : \mu \in \Q^\times \} = (\Q^\times)^q$, so the action of $\Q^\times$ is transitive on the admissible $\lambda$ and we may fix one $\lambda_0$. For fixed $\lambda_0$, each equation $x_i^{n_i} = \lambda_0 y_i$ has one rational solution if $n_i$ is odd and two if $n_i$ is even, giving $2^{\nu}$ solution tuples. The residual identifications are by $\mu$ with $\mu^q = 1$, i.e., $\mu = \pm 1$ if $q$ is even and $\mu = 1$ if $q$ is odd. If $q$ is even, $\mu = -1$ acts on tuples by $x_i \mapsto (-1)^{q_i} x_i$; a fixed tuple with all coordinates nonzero would force every $q_i$ even, contradicting $\gcd(q_0, \dots, q_n) = 1$, so the action is free and the $2^\nu$ tuples form $2^{\nu - 1}$ points. If $q$ is odd, every $n_i$ is odd, so $\nu = 0$ and the single tuple gives a single point.

\end{proof}

Note that when $q$ is even, coprimality of the weights forces $\nu \geq 1$, so the count $2^{\nu - 1}$ is a positive integer. For the chart family $(1, q, \dots, q)$ one has $\nu = 1$ if $q$ is even and $\nu = 0$ otherwise, so every fiber over the image contains exactly one rational point, in agreement with the bijection used in the proofs of \cref{prop-hcount} and \cref{thm-3}. Since fiber multiplicities are bounded, the counting function $Z_{\h}$ is controlled, up to bounded factors, by the number of liftable points in $\bP^n(\Q)$, and \cref{lem:wf-pairs} shows that for well-formed weights with $q \geq 2$ the liftability conditions accumulate at every prime. This forces the image to be sparse in the strongest sense.

\begin{prop}\label{prop:density-zero}
Let $\w = (q_0, \dots, q_n)$ and let $\phi$ be as in \cref{Veronese}. Exactly one of the following holds:
\begin{enumerate}[label=\roman*)]
    \item the exponents $n_0, \dots, n_n$ are pairwise coprime, and $\phi$ is surjective on $\Q$-points;

    \item the set of $y \in \bP^n(\Q)$ admitting a rational preimage under $\phi$ has density zero with respect to the Weil height, and
    \[
    Z_{\h}(\WP^n_{\w}(\Q), X) = o\left( X^{q(n+1)} \right) .
    \]
\end{enumerate}
By \cref{lem:wf-pairs}, case ii) occurs for every well-formed $\w$ with $q \geq 2$.
\end{prop}

\begin{proof}
If the exponents are pairwise coprime, then i) holds by \cref{cor:surjectivity} and the image is all of $\bP^n(\Q)$, of density one, so ii) fails; the two cases are exclusive, and it remains to prove ii) when $\gcd(n_i, n_j) > 1$ for some $i \neq j$.

Let $S = \{ i : \gcd(n_i, n_j) > 1 \text{ for some } j \neq i \}$, so that $|S| \geq 2$, both indices of such a pair lying in $S$. For a prime $p$ and $i \in S$, consider the event $A_p(i)$ on primitive integral tuples $(y_0, \dots, y_n)$: $p \parallel y_i$ and $p \nmid y_j$ for all $j \neq i$. If $A_p(i)$ holds, then every coordinate is nonzero and $v_p(y_i) = 1 \not\equiv 0 = v_p(y_j) \pmod{\gcd(n_i, n_j)}$ for a witness $j$, so condition i) of \cref{prop:lifting-criterion} fails and $y$ is not liftable. Each $A_p(i)$ is a congruence condition modulo $p^2$, of density
\[
\frac{(p-1) p^{-2} \cdot \left( (p-1) p^{-1} \right)^{n}}{1 - p^{-(n+1)}} = \frac{1}{p} + O\left( \frac{1}{p^2} \right)
\]
among primitive tuples, and for fixed $p$ the events $A_p(i)$, $i \in S$, are disjoint. For any finite set $P$ of primes $p > |S|$, counting lattice points in the finitely many admissible residue classes modulo $\prod_{p \in P} p^2$ shows that the density of primitive tuples avoiding all $A_p(i)$ with $p \in P$, $i \in S$, is
\[
\prod_{p \in P} \left( 1 - \frac{|S|}{p} + O\left( \frac{1}{p^2} \right) \right),
\]
which tends to $0$ as $P$ exhausts the primes, by Mertens' theorem. Hence the liftable points have density zero.

For the counting bound, \cref{eq-h-pullback} sends a point $\p \in \WP^n_{\w}(\Q)$ with $\h(\p) \leq X$ to the liftable point $y = \phi(\p)$ of $\bP^n(\Q)$ with $H(y) \leq X^q$, and the fibers of $\phi$ have at most $\prod_i n_i$ points, since over $\bar{\Q}$ the scalar $\lambda$ may be normalized to $1$ and a preimage is then determined by a choice of $n_i$-th root of unity at each nonzero coordinate. As the $y \in \bP^n(\Q)$ with $H(y) \leq X^q$ number $O(X^{q(n+1)})$ by \cite{schanuel1979}, and the liftable ones among them have density zero, the bound follows.
\end{proof}


\subsection{Beyond \(\Q\)}

We record the shape of the obstruction over a number field, which motivates the conjecture of \cref{sec-7}. Let $L$ be a number field and let $y \in \bP^n(L)$ have all coordinates nonzero. For a choice of homogeneous coordinates $(y_0, \dots, y_n)$ of $y$, the \emph{Kummer class} of $y$ is the class of $\big( [y_i] \big)_i$ in the quotient of
\[
\prod_{i=0}^{n} L^\times/(L^\times)^{n_i} \cong \prod_{i=0}^{n} H^1(L, \mu_{n_i})
\]
by the image of the diagonal map $c \mapsto \big( [c] \big)_i$, the isomorphism being the Kummer isomorphism of \cref{sec-5}.

\begin{prop}\label{prop:number-field}
Let $\w = (q_0, \dots, q_n)$, $n_i = q/q_i$, and $\phi$ as in \cref{Veronese}, let $L$ be a number field, and let $y = [y_0 : \dots : y_n] \in \bP^n(L)$ have all coordinates nonzero. Then:
\begin{enumerate}[label=\roman*)]
    \item the Kummer class of $y$ does not depend on the choice of homogeneous coordinates;

    \item $y$ lies in $\phi\big( \WP^n_{\w}(L) \big)$ if and only if the Kummer class of $y$ is trivial;

    \item if $y$ lies in $\phi\big( \WP^n_{\w}(L) \big)$, then for every finite place $v$ of $L$ and all pairs $i, j$,
    \[
    v(y_i) \equiv v(y_j) \pmod{\gcd(n_i, n_j)} .
    \]
\end{enumerate}
\end{prop}

\begin{proof}
i) Two systems of homogeneous coordinates for $y$ differ by a factor $c \in L^\times$, and $\big( [c y_i] \big)_i$ differs from $\big( [y_i] \big)_i$ by the diagonal image of $c$, so the two classes agree in the quotient.

ii) A point $\p = [x_0 : \dots : x_n] \in \WP^n_{\w}(L)$ satisfies $\phi(\p) = y$ if and only if $x_i^{n_i} = \lambda y_i$ for all $i$ and some $\lambda \in L^\times$. Given such $\p$ and $\lambda$, every $\lambda y_i$ is an $n_i$-th power, so $[y_i] = [\lambda]^{-1}$ in $L^\times/(L^\times)^{n_i}$ and $\big( [y_i] \big)_i$ is the diagonal image of $\lambda^{-1}$. 

Conversely, if the Kummer class is trivial, then $\big( [y_i] \big)_i$ is the diagonal image of some $c \in L^\times$, that is, $c^{-1} y_i \in (L^\times)^{n_i}$ for all $i$; setting $\lambda = c^{-1}$ and choosing $x_i \in L^\times$ with $x_i^{n_i} = \lambda y_i$, which is possible since $\lambda y_i \neq 0$, gives 
\[
\phi([x_0 : \dots : x_n]) = [\lambda y_0 : \dots : \lambda y_n] = y.
\]

iii) Apply $v$ to $x_i^{n_i} = \lambda y_i$: then 
\[
v(\lambda) + v(y_i) = n_i v(x_i),
\]
 so $v(\lambda) \equiv -v(y_i) \pmod{n_i}$ for every $i$. Reducing the congruences for $i$ and $j$ modulo $\gcd(n_i, n_j)$ gives $-v(y_i) \equiv v(\lambda) \equiv -v(y_j)$.
\end{proof}

A point of $\bP^n(L)$ with support $S \subsetneq \{0, \dots, n\}$ has all its preimages supported on $S$, by the proof of \cref{prop:lifting-criterion}, so \cref{prop:number-field} applies to it with every product and every pair of indices restricted to $S$ and the exponents $n_i$ unchanged.

\begin{exa}\label{exa-4}
The converse of \cref{prop:number-field}~iii) fails over a number field: the congruences are necessary but not sufficient. Let $\w = (2,3,5)$, so that 
\[
(n_0, n_1, n_2) = (15, 10, 6).
\]
Let $L = \Q(i)$ and  $y = [1 : i : 1] \in \bP^2(L)$. Each coordinate of $y$ is a unit at every finite place of $L$, so all valuations vanish and the congruences of \cref{prop:number-field}~iii) hold at every finite place. Yet $y$ has no preimage in $\WP^2_{(2,3,5)}(L)$. A lift requires 
\[
\lambda \in (L^\times)^{15} \cap (L^\times)^{6} = (L^\times)^{30},
\]
 the equality holding in any abelian group because $\gcd(30/15, 30/6) = 1$, and then
\[
i = (\lambda i) \, \lambda^{-1} \in (L^\times)^{10} \subseteq (L^\times)^{2} .
\]
But $i = z^2$ forces $z^8 = 1$ and $z^4 = -1$, so $z$ has order eight, while $\mu(L) = \{\pm 1, \pm i\}$ has order four. The obstruction is the unit $i$, invisible to the valuations; over $\Q$ the same weights are obstructed instead by a valuation, as in \cref{exa-3}.
\qed
\end{exa}

\begin{rem}\label{rem-selmer}
The failure has two sources, both of which degenerate over $\Q$. Write $N = \lcm(n_0, \dots, n_n)$. At each finite place $v$ the admissible values of $v(\lambda)$ form a coset of $N\Z$, containing $0$ at all but finitely many $v$; realizing a choice of such values by a single $\lambda \in L^\times$ requires the ideal 
\[
\prod_v \fP_v^{v(\lambda)}
\]
 to be principal, and moving $v(\lambda)$ within its coset multiplies that ideal by an $N$-th power, so the obstruction is a class in $\mathrm{Cl}(L)/\mathrm{Cl}(L)^N$. 
 
 Once $\lambda$ realizes admissible valuations, each $\lambda y_i$ has all valuations divisible by $n_i$ but need not be an $n_i$-th power, the discrepancy being the Selmer-type group
\[
\mathrm{Sel}_n(L) = \ker \left( L^\times/(L^\times)^n \longrightarrow \bigoplus_{v \in M_L^0} \Z/n \right) ,
\]
which sits in the exact sequence
\[
1 \longrightarrow \cO_L^\times/(\cO_L^\times)^n \longrightarrow \mathrm{Sel}_n(L) \longrightarrow \mathrm{Cl}(L)[n] \longrightarrow 1 ,
\]
obtained by sending $x$ with $(x) = \fa^n$ to $[\fa]$, the kernel being the image of $\cO_L^\times$ because 
\[
\cO_L^\times \cap (L^\times)^n = (\cO_L^\times)^n.
\] 
Over $\Q$ the class group is trivial and 
\[
\mathrm{Sel}_n(\Q) = \{\pm 1\}(\Q^\times)^n/(\Q^\times)^n
\]
 is $\Z/2$ for $n$ even and trivial for $n$ odd, which is precisely condition ii) of \cref{prop:lifting-criterion}; that is why valuations and signs give a complete criterion over $\Q$, and \cref{exa-4} is the nontrivial class of the unit $i$ in $\mathrm{Sel}_{10}(\Q(i))$. This is the Selmer-type contribution of \cref{conj-1}.
\end{rem}


\section{A Schanuel Theorem for the Weighted Height}\label{sec-main}

In this section we state and prove the main theorem of the paper: a Schanuel-type asymptotic for the weighted height on every weighted projective space over $\Q$ with coprime weights. The analytic engine is the multivariable Tauberian theorem of de la Bret\`eche \cite{delaBreteche2001}. Throughout we write $n_i = q/q_i$ for the exponents of $\phi$, so that $\gcd(n_0, \dots, n_n) = 1$, and we assume $n \geq 1$.

\subsection{The lifting set, the linear program, and the statement}\label{subsec-main-statement}

Recall the lifting set $M_{\w}$ of \cref{eq-monoid}: by \cref{prop:lifting-criterion}, a point of $\bP^n(\Q)$ with a primitive integral representative with nonzero coordinates lifts along $\phi$ if and only if its valuation vector at every prime lies in $M_{\w}$, together with a sign condition. The set $M_{\w}$ is not closed under addition, since the sum of two vectors with vanishing minimum may have positive minimum; what enters the counting is the set
\[
T_{\w} = \left\{ t \in M_{\w} \setminus \{0\} \ : \ t \text{ is minimal in } M_{\w} \setminus \{0\} \text{ for the coordinatewise partial order} \right\},
\]
which is finite by Dickson's lemma, and every nonzero element of $M_{\w}$ dominates an element of $T_{\w}$. Consider the linear program
\begin{equation}\label{eq-LP}
\max \ \sum_{g \in T_{\w}} c_g \qquad \text{subject to} \qquad c_g \geq 0, \qquad \sum_{g \in T_{\w}} c_g \, g_i \leq 1 \quad (0 \leq i \leq n),
\end{equation}
and let $a(\w)$ be its value and $\beta(\w)$ the dimension of its optimal face. The feasible region is a polytope, since $g \neq 0$ forces $c_g \leq 1$, so both are well defined.

When the exponents $n_i$ are pairwise coprime, all congruences in \cref{eq-monoid} are vacuous, $T_{\w} = \{ e_0, \dots, e_n \}$, the $i$-th constraint of \cref{eq-LP} reads $c_{e_i} \leq 1$, and the optimum is the single point $c = \mathbf{1}$, so $a(\w) = n+1$ and $\beta(\w) = 0$. The converse holds as well. Summing the $n+1$ constraints gives
\begin{equation}\label{eq-a-bound}
\sum_{g \in T_{\w}} c_g \left( \sum_{i=0}^n g_i \right) \leq n+1 ,
\end{equation}
and $\sum_i g_i \geq 1$ for every $g \in T_{\w}$, so $a(\w) \leq n+1$ for all weights; if equality holds, then every optimal $c$ satisfies $c_g \big( \sum_i g_i - 1 \big) = 0$, hence is supported on $T_{\w} \cap \{ e_0, \dots, e_n \}$, and $n+1 = \sum_g c_g \leq \# \{ i : e_i \in T_{\w} \}$ forces $e_i \in M_{\w}$ for every $i$, which by \cref{eq-monoid} says $\gcd(n_i, n_j) = 1$ for all $i \neq j$. The invariants are therefore nontrivial exactly when Kummer conditions are present.

The counting function of $\WP^n_{\w}(\Q)$ decomposes along the coordinate strata, which are themselves weighted projective spaces, and the weighted height is compatible with this stratification.

\begin{lem}\label{lem:stratum-height}
Let $\emptyset \neq S \subseteq \{0, \dots, n\}$, $\w_S = (q_i)_{i \in S}$, and $q_S = \lcm_{i \in S} q_i$. The locus $\{ x_j = 0 : j \notin S \}$ of $\WP^n_{\w}$ is $\WP^{|S|-1}_{\w_S}$, and on it the weighted height of $\WP^n_{\w}$ restricts to the weighted height of $\WP^{|S|-1}_{\w_S}$:
\[
\h_{\w}(\p) = \h_{\w_S}(\p) \qquad \text{for all } \p \in \WP^{|S|-1}_{\w_S}(\bar{\Q}) .
\]
\end{lem}

\begin{proof}
The locus consists of the classes of tuples $\x$ with $x_j = 0$ for $j \notin S$. The vanishing coordinates impose no condition on $\lambda$, so two such tuples satisfy $\x' = \lambda \star \x$ if and only if their restrictions to $S$ satisfy the same relation for the weights $\w_S$; restriction is therefore a bijection of the locus onto $\WP^{|S|-1}_{\w_S}$, and it carries tuples with entries in a field $k$ to tuples with entries in $k$ and back.

Write $d_S = q / q_S \in \Z_{\geq 1}$. For $\p$ supported on $S$, the $i$-th coordinate of $\phi_{\w}(\p)$ is $x_i^{q/q_i} = \big( x_i^{q_S/q_i} \big)^{d_S}$, so $\phi_{\w}(\p)$ is the image of $\phi_{\w_S}(\p)$ under the coordinatewise $d_S$-th power map on $\bP^{S}$, and $H\big( \phi_{\w}(\p) \big) = H\big( \phi_{\w_S}(\p) \big)^{d_S}$, the local factors at every place being raised to the power $d_S$. Hence, by \cref{eq-h-pullback} applied to $\w$ and to $\w_S$,
\[
\h_{\w}(\p) = H\big( \phi_{\w}(\p) \big)^{1/q} = H\big( \phi_{\w_S}(\p) \big)^{d_S / q} = H\big( \phi_{\w_S}(\p) \big)^{1/q_S} = \h_{\w_S}(\p) . \qedhere
\]
\end{proof}

Writing $Z^{\circ}_{\h}(\WP^n_{\w}(\Q), X)$ for the number of points of weighted height at most $X$ with \emph{all coordinates nonzero}, the strata partition the space by support, and \cref{lem:stratum-height} gives the exact identity
\begin{equation}\label{eq-stratification}
Z_{\h}\big( \WP^n_{\w}(\Q), X \big) \;=\; \sum_{\emptyset \neq S \subseteq \{0, \dots, n\}} Z^{\circ}_{\h}\big( \WP^{|S|-1}_{\w_S}(\Q), X \big) .
\end{equation}
For $|S| = 1$ the stratum is the single point $[0 : \cdots : 1 : \cdots : 0]$, of weighted height $1$, and $M_{\w_S} = \{0\}$, $T_{\w_S} = \emptyset$, $a(\w_S) = \beta(\w_S) = 0$; the summand equals $1$ for every $X \geq 1$. The main theorem determines the summands with $|S| \geq 2$.


\begin{thm}[Main Theorem] \label{thm:main}
Let $n \geq 1$, let $\w = (q_0, \dots, q_n)$ with $\gcd(q_0, \dots, q_n) = 1$, and let $a(\w)$ and $\beta(\w)$ be the value and the dimension of the optimal face of the linear program \cref{eq-LP}. Then there exist $\theta > 0$ and a polynomial $P_{\w} \in \R[x]$ of exact degree $\beta(\w)$ with positive leading coefficient $c_{\w}$ such that, as $X \to \infty$,
\[
Z^{\circ}_{\h}\big(\WP^n_{\w}(\Q), X\big) = X^{q \, a(\w)} \, P_{\w}(\log X) + O\big( X^{q \, a(\w) - \theta} \big) ;
\]
in particular $Z^{\circ}_{\h}\big(\WP^n_{\w}(\Q), X\big) \sim c_{\w} \, X^{q \, a(\w)} (\log X)^{\beta(\w)}$.
\end{thm}

Well-formedness is not required, and must not be: the apparatus below depends only on the exponents $n_i$, and every coprime weight vector arises as a stratum $\w_S$ in \cref{eq-stratification}. For pairwise coprime exponents $M_{\w}$ imposes only primitivity, and \cref{prop:assembly} with \cref{lem:L4} give $Z^{\circ}_{\h} \sim c_{\w} X^{q(n+1)}$ with $c_{\w} = 2^n / \zeta(n+1)$, recovering the constant of \cref{prop-hcount}. For a stratum with $d = \gcd_{i \in S} q_i > 1$ the identity $\h_{\w_S} = \h_{\w_S / d}^{1/d}$ gives $Z^{\circ}_{\h}(\WP^{|S|-1}_{\w_S}(\Q), X) = Z^{\circ}_{\h}(\WP^{|S|-1}_{\w_S / d}(\Q), X^{d})$ and reduces the count to the coprime case, the two presentations sharing the exponents $q_S / q_i$ and hence $a$ and $\beta$.

\begin{cor}\label{cor:full-count}
Let $n \geq 1$ and $\w = (q_0, \dots, q_n)$ with $\gcd(q_0, \dots, q_n) = 1$. For $S \subseteq \{0, \dots, n\}$ with $|S| \geq 2$ set $d = \gcd_{i \in S} q_i$ and
\[
\alpha_S = q_S \, a(\w_S), \qquad \beta_S = \beta(\w_S), \qquad c_{\w_S} = d^{\beta_S} \, c_{\w_S / d} ,
\]
the invariants $a(\w_S)$ and $\beta(\w_S)$ computed from the exponents $q_S / q_i$, $i \in S$, of the stratum and $c_{\w_S / d}$ the constant of \cref{thm:main}, and let $(\alpha, \beta)$ be the lexicographic maximum of the pairs $(\alpha_S, \beta_S)$. Then
\[
Z_{\h}\big( \WP^n_{\w}(\Q), X \big) \sim \Big( \sum_{S \,:\, (\alpha_S, \beta_S) = (\alpha, \beta)} c_{\w_S} \Big) \, X^{\alpha} (\log X)^{\beta} .
\]
A proper stratum with $(\alpha_S, \beta_S)$ lexicographically larger than the pair of the full support is an accumulating subvariety for the weighted height. Since $a(\w_S) \leq |S|$ by \cref{eq-a-bound}, one has $\alpha_S \leq q_S |S| \leq q n$ for proper strata, so $Z_{\h} = o\big( X^{q(n+1)} \big)$ whenever $a(\w) < n+1$, in accordance with \cref{prop:density-zero}.
\end{cor}

\begin{proof}
The decomposition \cref{eq-stratification} is a finite disjoint sum, and its $n+1$ strata with $|S| = 1$ contribute $1$ each. For $|S| \geq 2$ the reduction to $\w_S / d$ noted above and \cref{thm:main} give
\[
\begin{split}
Z^{\circ}_{\h}\big( \WP^{|S|-1}_{\w_S}(\Q), X \big)	&	\sim c_{\w_S / d} \, \big( X^{d} \big)^{(q_S / d) \, a(\w_S)} \big( d \log X \big)^{\beta_S} \\
										&	= c_{\w_S} \, X^{\alpha_S} (\log X)^{\beta_S} .
\end{split}
\]
Here $T_{\w_S} \neq \emptyset$, so $a(\w_S) > 0$ and $\alpha \geq \alpha_S > 0$; the singleton strata are therefore absorbed. Summands with lexicographically smaller pairs are $o\big( X^{\alpha} (\log X)^{\beta} \big)$, and the maximal summands add.
\end{proof}

Accumulation by strata genuinely occurs: for $\w = (2,3,5)$ the boundary line $\{x_0 = 0\} = \WP^1_{(3,5)}$ has pairwise coprime exponents $(5,3)$, so $(\alpha_S, \beta_S) = (30, 0)$ against the interior pair $(12, 1)$, and the full count is dominated by the boundary. These are the accumulating subvarieties of Batyrev--Manin, here read off the weights by the linear programs of the strata.

The proof of \cref{thm:main} occupies the remainder of this section. \cref{lem:L1} locates $M_{\w}$ inside the affine monoid $C_{\w}$ obtained by dropping the primitivity condition and computes $\operatorname{Hilb}(C_{\w})$; \cref{lem:L2} deduces that the local factor $\Phi_p$ differs from 
\[
\prod_{g \in T_{\w}} (1 - p^{-\langle g, s \rangle})^{-1}
\]
 by a factor $E_p$ with $|E_p - 1| \ll p^{-\mu}$; and \cref{lem:L3} globalizes this to 
 \[
 F = \prod_{g \in T_{\w}} \zeta(\langle g, s \rangle) \cdot G
 \]
  with $G$ holomorphic, bounded, and positive at the critical point. \cref{lem:lower} gives the unconditional lower bound 
  \[
  N_{\w}(B) \gg B^{a(\w)} (\log B)^{\beta(\w)}
  \]
   by parametrizing admissible tuples along the primal optimal face of \cref{eq-LP}. \cref{lem:L4} then applies the Tauberian theorem of de la Bret\`eche at a strictly complementary dual optimum, the lower bound pinning the logarithmic power to $\beta(\w)$, and \cref{prop:assembly} converts the tuple count into the point count.

\subsection{The lifting monoid and its Dirichlet series}\label{subsec-main-series}

An element of a monoid is \emph{irreducible} if it is nonzero and is not a sum of two nonzero elements. For $t \in \Z^{n+1}_{\geq 0}$ we write $z^t = z_0^{t_0} \cdots z_n^{t_n}$, and $e_i$ is the $i$-th standard basis vector.

\begin{lem}\label{lem:L1}
Let $\w$ be arbitrary weights, $n_i = q/q_i$, $g_{ij} = \gcd(n_i, n_j)$, and let
\[
\begin{split}
L_{\w}	&	= \left\{ t \in \Z^{n+1} \ : \ t_i \equiv t_j \pmod{g_{ij}} \ \text{ for all } i, j \right\}, \\
C_{\w}	&	= L_{\w} \cap \Z_{\geq 0}^{n+1},
\end{split}
\]
the \emph{lifting monoid}, so that $M_{\w} = C_{\w} \cap \{ \min_i t_i = 0 \}$ as in \cref{eq-monoid}. Write $\mathbf{1} = (1, \dots, 1)$ and let $\Gamma_{\w}$ be the graph on $\{0, \dots, n\}$ with an edge $\{i,j\}$ whenever $g_{ij} > 1$. Then:
\begin{enumerate}[label=\roman*)]
    \item $L_{\w}$ is a finite-index subgroup of $\Z^{n+1}$ containing $\mathbf{1}$ and all $n_i e_i$; the monoid $C_{\w}$ is finitely generated and positive, its real cone is the full orthant $\R_{\geq 0}^{n+1}$, and it has a unique minimal generating set $\operatorname{Hilb}(C_{\w})$, consisting of its irreducible elements.

    \item Every $t \in C_{\w}$ decomposes uniquely as $t = m \mathbf{1} + t'$ with $m \in \Z_{\geq 0}$ and $t' \in M_{\w}$, and then $m = \min_i t_i$. Consequently, as formal power series in $z = (z_0, \dots, z_n)$,
    \[
    \sum_{t \in C_{\w}} z^t = \frac{1}{1 - z^{\mathbf{1}}} \sum_{t \in M_{\w}} z^t .
    \]

    \item An element $h \in M_{\w} \setminus \{0\}$ is irreducible in $C_{\w}$ if and only if $h \in T_{\w}$, and every irreducible element of $C_{\w}$ lies in $M_{\w} \cup \{ \mathbf{1} \}$. Moreover $\mathbf{1}$ is irreducible if and only if $\Gamma_{\w}$ is connected. Hence
    \[
    \operatorname{Hilb}(C_{\w}) =
    \begin{cases}
    T_{\w} \cup \{ \mathbf{1} \}, & \Gamma_{\w} \text{ connected}, \\
    T_{\w}, & \Gamma_{\w} \text{ disconnected},
    \end{cases}
    \]
    and in the disconnected case $\mathbf{1} = \mathbf{1}_S + \mathbf{1}_{S^c}$ for any partition $\{0,\dots,n\} = S \sqcup S^c$ with no $\Gamma_{\w}$-edge between $S$ and $S^c$.

    \item Assume $n \geq 1$. For each $i$ the set $T_{\w}$ contains exactly one multiple of $e_i$, namely $\ell_i e_i$ with $\ell_i = \lcm_{j \neq i} g_{ij}$, and $\ell_i \mid n_i$.
\end{enumerate}
\end{lem}

\begin{proof}
i) $L_{\w}$ is the kernel of the homomorphism
\[
\Z^{n+1} \longrightarrow \prod_{i<j} \Z/g_{ij}, \qquad t \mapsto (t_i - t_j)_{i<j} ,
\]
hence a subgroup of finite index; it contains $\mathbf{1}$ since all differences vanish, and $n_i e_i$ since $g_{ij} \mid n_i$. Thus $C_{\w}$ is the intersection of a lattice with the rational polyhedral cone $\R_{\geq 0}^{n+1}$, so it is finitely generated by Gordan's lemma, and positive since $C_{\w} \cap (-C_{\w}) = \{0\}$; a positive affine monoid is generated by its irreducible elements, which form its unique minimal generating set \cite{bruns-gubeladze}. The real cone is the full orthant because $C_{\w}$ contains the axis multiples $n_i e_i$.

ii) Given $t \in C_{\w}$, set $m = \min_i t_i \geq 0$ and $t' = t - m\mathbf{1}$; then $t' \geq 0$ with $\min_i t'_i = 0$, and $t' \in L_{\w}$ because $L_{\w}$ is a group containing $\mathbf{1}$, so $t' \in M_{\w}$. Conversely $m \mathbf{1} + t' \in C_{\w}$ for every $m \in \Z_{\geq 0}$ and $t' \in M_{\w}$, and its minimum is $m$, so both parameters are determined by the sum. Summing $z^{m \mathbf{1} + t'}$ over the two parameters separately gives the series identity.

iii) Let $h \in M_{\w} \setminus \{0\}$ and fix $i_0$ with $h_{i_0} = 0$. If $h \notin T_{\w}$, pick nonzero $t \in M_{\w}$ with $t \leq h$, $t \neq h$; then
\[
h - t \in \Z_{\geq 0}^{n+1} \cap L_{\w} = C_{\w}
\]
is nonzero, so $h = t + (h - t)$ is reducible. If $h = u + v$ with $u, v \in C_{\w}$ nonzero, then $u_{i_0} = v_{i_0} = 0$, so $u \in M_{\w}$ is nonzero with $u \leq h$, $u \neq h$, and $h \notin T_{\w}$. This proves the equivalence.

Next, if $t \in C_{\w}$ has all coordinates positive and $t \neq \mathbf{1}$, then $t = \mathbf{1} + (t - \mathbf{1})$ with $t - \mathbf{1} \in C_{\w}$ nonzero by ii), so $t$ is reducible; hence every irreducible lies in $M_{\w} \cup \{\mathbf{1}\}$. Finally, a decomposition $\mathbf{1} = u + v$ with $u, v \in C_{\w}$ nonzero forces $u = \mathbf{1}_S$, $v = \mathbf{1}_{S^c}$ for a proper nonempty $S$, and $\mathbf{1}_S \in L_{\w}$ if and only if $g_{ij} = 1$ for all $i \in S$, $j \notin S$, that is, if and only if $S$ is a union of connected components of $\Gamma_{\w}$; such $S$ exists precisely when $\Gamma_{\w}$ is disconnected.

iv) Evaluated at $t = \ell e_i$, the congruences cutting out $L_{\w}$ read $\ell \equiv 0 \pmod{g_{ij}}$ for $j \neq i$, and $0 \equiv 0 \pmod{g_{jk}}$ for $j, k \neq i$; so the multiples of $e_i$ in $L_{\w}$ are exactly the multiples of $\ell_i = \lcm_{j \neq i} g_{ij}$, and $\ell_i \mid n_i$ because $g_{ij} \mid n_i$ for every $j$ and an lcm of divisors of $n_i$ divides $n_i$. Since $n \geq 1$, each $\ell_i e_i$ has vanishing minimum and hence lies in $M_{\w}$. A nonzero $t \in M_{\w}$ with $t \leq \ell_i e_i$ is of the form $\ell e_i$ with $0 < \ell \leq \ell_i$ and $\ell_i \mid \ell$, so $t = \ell_i e_i$; thus $\ell_i e_i \in T_{\w}$, while $\ell e_i$ with $\ell > \ell_i$ dominates it strictly and is not minimal.
\end{proof}

\begin{lem}\label{lem:L2}
Let $\w$, $n_i$, $g_{ij}$, $L_{\w}$, $C_{\w}$, $M_{\w}$, $T_{\w}$ be as in \cref{lem:L1}, let $D = \lcm_{i<j} g_{ij}$, and let
\[
R_{\w} = L_{\w} \cap \{0, 1, \dots, D-1\}^{n+1}
\]
be the set of residues modulo $D$ satisfying the congruences of $L_{\w}$, a finite set containing $0$. Then:
\begin{enumerate}[label=\roman*)]
    \item As formal power series,
    \[
    \begin{split}
    \sum_{t \in C_{\w}} z^t	&	= \frac{\sum_{r \in R_{\w}} z^r}{\prod_{i=0}^{n} (1 - z_i^{D})} , \\
    \sum_{t \in M_{\w}} z^t	&	= \left( 1 - z^{\mathbf{1}} \right) \frac{\sum_{r \in R_{\w}} z^r}{\prod_{i=0}^{n} (1 - z_i^{D})} .
    \end{split}
    \]

    \item For a prime $p$ and $s \in \C^{n+1}$ with $\Re s_i > 0$ set
    \[
    \begin{split}
    \Phi_p(s)	&	= \sum_{t \in M_{\w}} p^{-\langle t, s \rangle} , \\
    E_p(s)	&	= \Phi_p(s) \prod_{g \in T_{\w}} \left( 1 - p^{-\langle g, s \rangle} \right) ,
    \end{split}
    \]
    both series converging absolutely. Let the integers $a_u$, indexed by $u \in \Z_{\geq 0}^{n+1}$, be defined by the formal identity
    \[
    \left( \sum_{t \in M_{\w}} z^t \right) \prod_{g \in T_{\w}} \left( 1 - z^{g} \right) = \sum_{u \in \Z_{\geq 0}^{n+1}} a_u \, z^u
    \]
    in $\Z[[z_0, \dots, z_n]]$, so that $E_p(s) = \sum_u a_u \, p^{-\langle u, s \rangle}$ for every $p$; in particular the $a_u$ do not depend on $p$. Then $a_0 = 1$, one has $|a_u| \leq 2^{|T_{\w}|}$ for every $u$, and $a_u = 0$ unless $u = 0$ or
    \[
    u \geq g + h \quad \text{coordinatewise, for some } g \in T_{\w} \text{ and some } h \in \operatorname{Hilb}(C_{\w}) .
    \]

    \item Assume $n \geq 1$. For every $\sigma_0 > 0$ there is a constant $A = A(\w, \sigma_0)$ such that for every prime $p$ and every $s \in \C^{n+1}$ with $\Re s_i \geq \sigma_0$ for all $i$,
    \[
    \left| E_p(s) - 1 \right| \leq A \, p^{-\mu(\Re s)},
    \qquad
    \mu(\sigma) = \min_{g \in T_{\w}} \langle g, \sigma \rangle + \min_{h \in T_{\w} \cup \{\mathbf{1}\}} \langle h, \sigma \rangle .
    \]
    Moreover $E_p(\sigma) > 0$ for all real $\sigma$ with $\sigma_i \geq \sigma_0$.
\end{enumerate}
\end{lem}

\begin{proof}
i) Since $g_{ij} \mid D$ for all $i, j$, we have $D \Z^{n+1} \subseteq L_{\w}$, so membership in $L_{\w}$ depends only on the residue of $t$ modulo $D$ coordinatewise. For $r \in \{0, \dots, D-1\}^{n+1}$, the set of $t \in \Z_{\geq 0}^{n+1}$ with $t \equiv r \pmod{D}$ coordinatewise is exactly $r + D \Z_{\geq 0}^{n+1}$, and it lies in $L_{\w}$ if and only if $r \in R_{\w}$. Summing $z^t$ over one such translated orthant gives
\[
\sum_{w \in \Z_{\geq 0}^{n+1}} z^{\,r + D w} = \frac{z^r}{\prod_{i=0}^{n} (1 - z_i^{D})} ,
\]
and summing over $r \in R_{\w}$ gives the first identity. The second follows from the first by \cref{lem:L1}~ii).

ii) Absolute convergence is clear from $M_{\w} \subseteq \Z_{\geq 0}^{n+1}$ and $\sum_{t \geq 0} p^{-\langle t, \Re s \rangle} = \prod_i (1 - p^{-\Re s_i})^{-1} < \infty$, and evaluating the formal identity at $z_i = p^{-s_i}$ gives $E_p(s) = \sum_u a_u \, p^{-\langle u, s \rangle}$. Expanding the product over the subsets of $T_{\w}$ and writing $g_S = \sum_{g \in S} g$,
\[
\sum_{u} a_u \, z^u = \sum_{t \in M_{\w}} \ \sum_{S \subseteq T_{\w}} (-1)^{|S|} z^{\, t + g_S} ,
\qquad \text{so} \qquad
a_u = \sum_{(t, S)} (-1)^{|S|} ,
\]
the last sum over the pairs $(t, S)$ with $t \in M_{\w}$, $S \subseteq T_{\w}$ and $t + g_S = u$. Each $S$ determines $t = u - g_S$, so at most $2^{|T_{\w}|}$ pairs contribute to any given $u$ and $|a_u| \leq 2^{|T_{\w}|}$.

Call a contributing pair \emph{special} if $S = \emptyset$ and $t \in T_{\w} \cup \{0\}$, or if $t = 0$ and $S$ is a singleton. We claim that a non-special contributing pair forces $u \geq g + h$ with $g \in T_{\w}$ and $h \in \operatorname{Hilb}(C_{\w})$. There are three cases. If $t \notin T_{\w} \cup \{0\}$, then $t \geq g$ for some $g \in T_{\w}$ by Dickson's lemma, with $g \neq t$, so that
\[
c = t - g \in \Z_{\geq 0}^{n+1} \cap L_{\w} = C_{\w}
\]
is nonzero; since $\operatorname{Hilb}(C_{\w})$ generates $C_{\w}$, we have $c \geq h$ for some $h \in \operatorname{Hilb}(C_{\w})$, whence $u \geq t = g + c \geq g + h$. If $t \in T_{\w}$ and $S \neq \emptyset$, then $u \geq t + g_1$ for any $g_1 \in S$. If $t = 0$ and $|S| \geq 2$, then $u \geq g_1 + g_2$ for two distinct $g_1, g_2 \in S$. In the last two cases both summands lie in $T_{\w} \subseteq \operatorname{Hilb}(C_{\w})$ by \cref{lem:L1}~iii), and the claim follows.

It remains to evaluate $a_u$. For $u = 0$ the only contributing pair is $(0, \emptyset)$, so $a_0 = 1$. For $u \in T_{\w}$ no non-special pair contributes: such a pair would give $u \geq g + h$ with $h \neq 0$, hence $u \geq g$ and $u \neq g$ for some $g \in T_{\w} \subseteq M_{\w} \setminus \{0\}$, contradicting the minimality of $u$ in $M_{\w} \setminus \{0\}$. The special pairs contributing to $u \in T_{\w}$ are $(u, \emptyset)$ and $(0, \{u\})$, contributing $+1$ and $-1$, so $a_u = 0$. Finally, if $u \neq 0$ and $u \notin T_{\w}$, then no special pair contributes at all, so every contributing pair is non-special and $u \geq g + h$ as claimed.

iii) Since $n \geq 1$ we have $T_{\w} \neq \emptyset$. Write $\sigma = \Re s$. By ii) and $|p^{-\langle u, s \rangle}| = p^{-\langle u, \sigma \rangle}$,
\[
\left| E_p(s) - 1 \right| \leq \sum_{u \neq 0} |a_u| \, p^{-\langle u, \sigma \rangle} \leq 2^{|T_{\w}|} \sum_{u} p^{-\langle u, \sigma \rangle} ,
\]
the last sum over those $u$ dominating $g + h$ for some $g \in T_{\w}$ and $h \in \operatorname{Hilb}(C_{\w})$. Writing $u = g + h + w$ with $w \in \Z_{\geq 0}^{n+1}$ and summing the geometric series in $w$, each such $u$ being counted at least once,
\[
\left| E_p(s) - 1 \right|
\leq 2^{|T_{\w}|} \sum_{g \in T_{\w}} \ \sum_{h \in \operatorname{Hilb}(C_{\w})} p^{-\langle g + h, \sigma \rangle} \prod_{i=0}^n \left( 1 - p^{-\sigma_i} \right)^{-1} .
\]
By \cref{lem:L1}~iii) we have $\operatorname{Hilb}(C_{\w}) \subseteq T_{\w} \cup \{\mathbf{1}\}$, so the double sum has at most $|T_{\w}| \left( |T_{\w}| + 1 \right)$ terms, each satisfying $\langle g + h, \sigma \rangle \geq \mu(\sigma)$; and $p^{-\sigma_i} \leq 2^{-\sigma_0}$ gives $\prod_i (1 - p^{-\sigma_i})^{-1} \leq (1 - 2^{-\sigma_0})^{-(n+1)}$. Hence the bound holds with
\[
A(\w, \sigma_0) = 2^{|T_{\w}|} \, |T_{\w}| \left( |T_{\w}| + 1 \right) \left( 1 - 2^{-\sigma_0} \right)^{-(n+1)} .
\]
Positivity is immediate: $\Phi_p(\sigma) \geq 1$ because $0 \in M_{\w}$, and $0 < 1 - p^{-\langle g, \sigma \rangle} < 1$ for every $g \in T_{\w}$ because $\langle g, \sigma \rangle > 0$; so $E_p(\sigma) > 0$.
\end{proof}

\begin{lem}\label{lem:L3}
Assume $n \geq 1$, let $\w$, $T_{\w}$, $\Phi_p$, $E_p$, $\mu$ be as in \cref{lem:L2}, and let $\ell_i$ be the axis generators of \cref{lem:L1}~iv). Set
\[
\kappa = \sum_{i=0}^{n} \frac{1}{\ell_i}, \qquad \eta = \frac{1}{2} \min(1, \kappa), \qquad \delta_{\w} = \frac{\min(1,\kappa)}{2\,(1 + \min(1,\kappa))} ,
\]
so that $0 < \delta_{\w} \leq \tfrac{1}{4}$, and for $0 < \delta \leq \delta_{\w}$ define the open tube
\[
\Omega_\delta = \left\{ s \in \C^{n+1} \ : \ \langle g, \Re s \rangle > 1 - \delta \ \text{ for all } g \in T_{\w} \right\} .
\]
Let
\[
F(s) = \sum_{y} \ \prod_{i=0}^{n} y_i^{-s_i} ,
\]
the sum over the tuples $y \in \Z_{\geq 1}^{n+1}$ whose valuation vector $\big( v_p(y_0), \dots, v_p(y_n) \big)$ lies in $M_{\w}$ for every prime $p$; these tuples are primitive, since every element of $M_{\w}$ has vanishing minimum. Then:
\begin{enumerate}[label=\roman*)]
    \item $F(s)$ converges absolutely on $\{ \langle g, \Re s \rangle > 1 \ \text{for all } g \in T_{\w} \}$, and there
    \[
    F(s) = \prod_p \Phi_p(s) = \prod_{g \in T_{\w}} \zeta\big( \langle g, s \rangle \big) \cdot G(s), \qquad G(s) = \prod_p E_p(s) .
    \]

    \item Every $s \in \Omega_{\delta_{\w}}$ satisfies $\Re s_i > 3 / (4 \max_j \ell_j)$ for all $i$, and $\mu(\Re s) > 1 + \eta$. Consequently the product $G(s)$ converges absolutely and uniformly on $\Omega_{\delta_{\w}}$, defines a holomorphic function there, and satisfies 
    \[
    |G(s)| \leq C_G(\w)
    \]
     uniformly on $\Omega_{\delta_{\w}}$, with a constant depending only on $\w$.

    \item $F$ extends meromorphically to $\Omega_{\delta_{\w}}$, the extension being given throughout $\Omega_{\delta_{\w}}$ by
    \[
    F(s) = \prod_{g \in T_{\w}} \zeta\big( \langle g, s \rangle \big) \cdot G(s)
    \]
    with $G$ holomorphic and bounded there. Its polar divisor in $\Omega_{\delta_{\w}}$ is therefore contained in the union of the hyperplanes $\langle g, s \rangle = 1$, $g \in T_{\w}$, and 
    \[
    |F(s)| \leq C_G(\w) \prod_{g \in T_{\w}} \left| \zeta(\langle g, s \rangle) \right|, \quad \text{  on } \quad \Omega_{\delta_{\w}}.
    \]

    \item Let $\sigma^*$ be any optimal solution of the dual of \cref{eq-LP}, so that $\sigma^* \geq 0$ and $\langle g, \sigma^* \rangle \geq 1$ for all $g \in T_{\w}$. Then 
    \[
    \sum_i \sigma^*_i = a(\w), \quad \text{  and } \quad \sigma^* \in \Omega_{\delta_{\w}}
    \]
     with $\sigma^*_i \geq 1/\ell_i > 0$ for every $i$ and $G(\sigma^*) > 0$.
\end{enumerate}
\end{lem}

\begin{proof}
We prove ii) first, since i) rests on the convergence of $G$.

ii) Let $\sigma = \Re s$ with $\langle g, \sigma \rangle > 1 - \delta_{\w}$ for all $g \in T_{\w}$. Taking $g = \ell_i e_i$ gives
\[
\sigma_i > \frac{1 - \delta_{\w}}{\ell_i} \geq \frac{1 - \delta_{\w}}{\max_j \ell_j} \geq \frac{3}{4 \max_j \ell_j} =: \sigma_0(\w) > 0 ,
\]
using $\delta_{\w} \leq \tfrac14$; in particular $\sigma$ lies in the positive orthant, where \cref{lem:L2} applies, with a threshold $\sigma_0(\w)$ depending only on $\w$. Summing over $i$ gives $\langle \mathbf{1}, \sigma \rangle > (1 - \delta_{\w}) \kappa$, so that the second minimum in $\mu$, taken over $T_{\w} \cup \{\mathbf{1}\}$, exceeds $(1 - \delta_{\w}) \min(1, \kappa)$, whence
\[
\begin{split}
\mu(\sigma)	&	= \min_{g \in T_{\w}} \langle g, \sigma \rangle + \min_{h \in T_{\w} \cup \{\mathbf{1}\}} \langle h, \sigma \rangle \\
		&	> (1 - \delta_{\w}) + (1 - \delta_{\w}) \min(1, \kappa) \\
		&	= (1 - \delta_{\w}) \big( 1 + \min(1, \kappa) \big) = 1 + \tfrac{1}{2} \min(1,\kappa) = 1 + \eta ,
\end{split}
\]
the last two equalities by the choice of $\delta_{\w}$ and of $\eta$. Writing $A = A(\w, \sigma_0(\w))$, \cref{lem:L2}~iii) now gives
\[
\left| E_p(s) - 1 \right| \leq A \, p^{-\mu(\Re s)} \leq A \, p^{-(1+\eta)}  \qquad \text{for all } s \in \Omega_{\delta_{\w}} ,
\]
so 
\[
\sum_p \sup_{\Omega_{\delta_{\w}}} |E_p(s) - 1| \leq A \sum_p p^{-(1+\eta)} < \infty
\]
 since $\eta > 0$. Each $E_p$ is holomorphic on $\Omega_{\delta_{\w}}$, the series $\Phi_p$ converging absolutely there; hence the product $G$ converges absolutely and uniformly, is holomorphic, and
\[
|G(s)| \leq \prod_p \left( 1 + A \, p^{-(1+\eta)} \right) =: C_G(\w) < \infty .
\]

i) The stated region lies in $\Omega_{\delta_{\w}}$, and on it each $\zeta(\langle g, s \rangle)$ is given by its absolutely convergent Euler product. The coefficients of $F$ are nonnegative, so absolute convergence may be tested at $\sigma = \Re s$, where unique factorization gives 
\[
\sum_y \prod_i y_i^{-\sigma_i} = \prod_p \Phi_p(\sigma)
\]
 in $[0, \infty]$; multiplying the local identities
\[
\Phi_p(s) = E_p(s) \prod_{g \in T_{\w}} \left( 1 - p^{-\langle g, s \rangle} \right)^{-1}
\]
of \cref{lem:L2}~ii) over $p$ exhibits the right side as $\prod_{g \in T_{\w}} \zeta(\langle g, \sigma \rangle) \cdot G(\sigma)$, both factors being finite, the first by $\langle g, \sigma \rangle > 1$ and the second by ii). The same computation at $s$ gives the factorization.

iii) The region of i) is contained in $\Omega_{\delta_{\w}}$, since $1 > 1 - \delta_{\w}$, and on it 
\[
F = \prod_{g \in T_{\w}} \zeta(\langle g, s \rangle) \cdot G
\]
 by i). The right side is meromorphic on $\Omega_{\delta_{\w}}$: each factor $\zeta(\langle g, s \rangle)$ is the quotient of the entire function $\big( \langle g, s \rangle - 1 \big) \zeta(\langle g, s \rangle)$ by the linear form $\langle g, s \rangle - 1$, and $G$ is holomorphic and bounded there by ii). As an intersection of open half-spaces in the coordinates $\Re s$, the tube $\Omega_{\delta_{\w}}$ is convex, hence connected, so the right side is the unique meromorphic extension of $F$. The two remaining assertions follow, the second from $|G| \leq C_G(\w)$. Note that $G$ is not asserted to be zero-free on $\Omega_{\delta_{\w}}$, so a pole of a zeta factor may cancel; the containment cannot be improved to an equality without further work.

iv) Strong duality gives 
\[
\sum_i \sigma^*_i = a(\w).
\]
 Dual feasibility gives $\langle g, \sigma^* \rangle \geq 1 > 1 - \delta_{\w}$, so $\sigma^* \in \Omega_{\delta_{\w}}$, and against the axis generators $\ell_i \sigma^*_i \geq 1$, that is $\sigma^*_i \geq 1/\ell_i > 0$. By ii), 
 \[
 \sum_p |E_p(\sigma^*) - 1| < \infty;
 \]
  each factor satisfies $E_p(\sigma^*) > 0$ by \cref{lem:L2}~iii), and a product $\prod_p E_p$ with $\sum_p |E_p - 1| < \infty$ and no vanishing factor converges to a nonzero limit, positive here since every factor is.
\end{proof}

\subsection{The lower bound}\label{subsec-main-lower}

\begin{lem}\label{lem:lower}
Let $n \geq 1$, let $\w = (q_0, \dots, q_n)$ with $\gcd(q_0, \dots, q_n) = 1$, let $N_{\w}(B)$ denote the number of tuples $y \in \Z_{\geq 1}^{n+1}$ with $\max_i y_i \leq B$ whose valuation vector at every prime lies in $M_{\w}$, and let $a(\w)$ and $\beta(\w)$ be the value and the dimension of the optimal face of \cref{eq-LP}. Then, for $B \geq 2$,
\[
N_{\w}(B) \gg B^{\,a(\w)} (\log B)^{\beta(\w)} .
\]
\end{lem}

\begin{proof}
By the theorem of Goldman--Tucker on strict complementarity \cite[\S 7.9]{schrijver1986}, there is an optimal primal--dual pair $(c^*, \sigma^*)$ of \cref{eq-LP} with $\sigma^*$ in the relative interior of the dual optimal face, such that $c^*_g > 0$ exactly for $g$ in the tight set 
\[
T^{\circ} = \{ g \in T_{\w} : \langle g, \sigma^* \rangle = 1 \}
\]
 and, for each coordinate $i$, either $\sigma^*_i > 0$ or the $i$-th primal constraint is slack. By \cref{lem:L3}~iv), $\sigma^*_i \geq 1/\ell_i > 0$ for every $i$, so every primal constraint is tight at $c^*$:
\begin{equation}\label{eq-strict-comp}
\sum_{g \in T^{\circ}} c^*_g \, g = \mathbf{1}, \qquad c^*_g > 0 \quad (g \in T^{\circ}),
\end{equation}
and $\sum_i \sigma^*_i = \sum_g c^*_g = a(\w)$ by strong duality. Write $\varrho = |T^{\circ}|$ and $r = \operatorname{rank}(T^{\circ})$. Complementary slackness against $\sigma^*$ characterizes the primal optimal face as
\[
\Phi = \left\{ c \in \R_{\geq 0}^{T_{\w}} \ : \ c_g = 0 \ \text{for } g \notin T^{\circ}, \quad \textstyle\sum_{g \in T^{\circ}} c_g \, g = \mathbf{1} \right\},
\]
using again that every $\sigma^*_i > 0$ forces every coordinate constraint tight. The affine solution set of $\sum_{g \in T^{\circ}} c_g \, g = \mathbf{1}$ has dimension $\varrho - r$, and $\Phi$ contains the strictly positive point $c^*$ of \cref{eq-strict-comp}, an interior point of the nonnegativity constraints; hence
\begin{equation}\label{eq-face-dim}
\dim \Phi = \varrho - r = \beta(\w) .
\end{equation}
Set $d = \beta(\w)$ and choose linearly independent $u^{(1)}, \dots, u^{(d)} \in \R^{T^{\circ}}$ with $\sum_{g \in T^{\circ}} u^{(k)}_g \, g = \mathbf{0}$; these span the direction space of $\Phi$. Pairing with $\sigma^*$ gives
\[
\sum_g u^{(k)}_g = \big\langle \sum_g u^{(k)}_g \, g, \sigma^* \big\rangle = 0,
\]
so the objective of \cref{eq-LP} is constant along these directions, as it must be on the optimal face.

To a tuple $z = (z_g)_{g \in T^{\circ}}$ of pairwise coprime positive integers we associate $y(z) \in \Z_{\geq 1}^{n+1}$ with coordinates 
\[
y_i(z) = \prod_{g \in T^{\circ}} z_g^{g_i}.
\]
 At a prime $p$ at most one $z_g$ is divisible by $p$, so the valuation vector of $y(z)$ at $p$ is either $\mathbf{0}$ or $v_p(z_g) \, g$ for that single $g$; both lie in $M_{\w}$, since $\mathbf{0} \in M_{\w}$ and $M_{\w}$ is stable under positive integer multiples, the congruences of \cref{eq-monoid} being linear and $\min_i e g_i = e \min_i g_i = 0$. Hence every $y(z)$ is counted by $N_{\w}(B)$ as soon as $\max_i y_i(z) \leq B$. Moreover distinct elements of $T_{\w}$ are never proportional: if $g' = \lambda g$ with $\lambda > 1$, then $g'$ dominates the nonzero element $g$ of $M_{\w}$ coordinatewise with $g' \neq g$, contradicting the minimality of $g'$. Consequently the decomposition $v_p(y) = e \, g$ with 
 \[
 e \in \Z_{\geq 1}
 \]
  and $g \in T^{\circ}$ is unique when it exists, the tuple $z$ is recovered from $y(z)$ by $z_g = \prod_{p} p^{\, e_p}$, the product over the primes $p$ with $v_p(y) = e_p \, g$ for some $e_p \in \Z_{\geq 1}$, and $z \mapsto y(z)$ is injective on pairwise coprime tuples.

Assume for the moment that $d \geq 1$. Let 
\[
m_0 = \min \big\{ \| \sum_k m_k u^{(k)} \|_{\infty} : m \in \Z^d \setminus \{0\} \big\},
\]
 which is positive since the $u^{(k)}$ are linearly independent and the minimum over a lattice is attained; put $\tau = (\log 4)/m_0$, and fix $\varepsilon > 0$ so small that 
 \[
 \tau \varepsilon d \max_{k, g} |u^{(k)}_g| \leq \tfrac{1}{2} \min_{g \in T^{\circ}} c^*_g.
 \]
  For $j \in \Z^d$ with $\|j\|_{\infty} \leq \varepsilon \log B$ set
\[
c(j) = c^* + \frac{\tau}{\log B} \sum_{k=1}^{d} j_k \, u^{(k)} ,
\]
so that $c(j)_g \geq \tfrac{1}{2} c^*_g > 0$ and $c(j) \in \Phi$; in particular 
\[
\sum_g c(j)_g \, g = \mathbf{1}
\]
 and 
 \[
 \sum_g c(j)_g = a(\w).
 \]
  Consider the boxes
\[
R_j = \prod_{g \in T^{\circ}} \left( \tfrac{1}{2} B^{c(j)_g}, \; B^{c(j)_g} \right] .
\]
For $z \in R_j$ one has $y_i(z) \leq B^{\sum_g c(j)_g g_i} = B$ for every $i$. For $j \neq j'$ the choice of $\tau$ produces a coordinate $g$ with $|c(j)_g - c(j')_g| \log B \geq \log 4$, so the corresponding intervals are disjoint and $R_j \cap R_{j'} = \emptyset$. If instead $d = 0$, then $\Phi = \{c^*\}$ and the construction degenerates to the single box 
\[
R = \prod_{g \in T^{\circ}} ( \tfrac{1}{2} B^{c^*_g}, B^{c^*_g} ],
\]
 no $m_0$, $\tau$ or $\varepsilon$ being needed; the count below applies to it verbatim.

Fix $P_0 = P_0(\varrho)$ so large that 
\[
4 \varrho^2 \sum_{p > P_0} p^{-2} \leq \tfrac{1}{2} \, 2^{-\varrho} \prod_{p \leq P_0} (1 - 1/p)^{\varrho};
\]
 such a $P_0$ exists because the left side is $O(1/P_0)$ while the right side is $\gg (\log P_0)^{-\varrho}$ by Mertens' theorem. Within $R_j$, the tuples with every $z_g$ coprime to $\prod_{p \leq P_0} p$ number at least 
 \[
 2^{-\varrho} \prod_{p \leq P_0}(1 - 1/p)^{\varrho} \prod_g B^{c(j)_g} + O\big( B^{a(\w) - \theta_0} \big)
 \]
  for some $\theta_0 > 0$, by counting reduced residues in each interval separately, the sides being $\gg B^{c^*_g/2}$; among these, the tuples in which two coordinates $z_g, z_{g'}$ share a prime factor, necessarily $p > P_0$, number at most 
  \[
  \sum_{g \neq g'} \sum_{p > P_0} 4 \, p^{-2} \prod_h B^{c(j)_h} \leq 4 \varrho^2 \big( \sum_{p > P_0} p^{-2} \big) B^{a(\w)},
  \]
   which by the choice of $P_0$ is at most half of the main term above. Hence each box contributes $\gg B^{a(\w)}$ pairwise coprime tuples; the boxes are disjoint, the map $z \mapsto y(z)$ is injective, and summing over the $\gg (\log B)^d$ admissible $j$ gives $N_{\w}(B) \gg B^{a(\w)} (\log B)^{\beta(\w)} .$
\end{proof}

\subsection{Proof of the Main Theorem}\label{subsec-main-proof}

\begin{lem}\label{lem:L4}
Let $n \geq 1$, let $\w = (q_0, \dots, q_n)$ with $\gcd(q_0, \dots, q_n) = 1$, and let $N_{\w}(B)$ be as in \cref{lem:lower}. Then there exist $\theta > 0$ and a polynomial $\Psi \in \R[x]$ of exact degree $\beta(\w)$, with positive leading coefficient $c_{\Psi}$, such that
\[
N_{\w}(B) = B^{\,a(\w)} \, \Psi(\log B) + O\!\left( B^{\,a(\w) - \theta} \right) .
\]
\end{lem}

\begin{proof}
We verify the three hypotheses of \cite[Thm.~1]{delaBreteche2001} for the series $F$ of \cref{lem:L3}, with the notation of \cref{lem:lower}: $(c^*, \sigma^*)$ the strictly complementary pair of \cref{eq-strict-comp}, $T^{\circ}$ its tight set, $\varrho = |T^{\circ}|$, $r = \operatorname{rank}(T^{\circ})$, polar forms $\{ \langle g, \cdot \rangle : g \in T^{\circ} \}$, and auxiliary forms $\{ \langle g, \cdot \rangle : g \in T_{\w} \setminus T^{\circ} \}$. The count is taken in the direction $\mathbf{1}$, all of whose components are positive, so no coordinate forms are adjoined to the polar family.

Dual feasibility gives $\langle g, \sigma^* \rangle \geq 1$ for all $g \in T_{\w}$, so any real $\sigma$ with $\sigma_i > \sigma^*_i$ for every $i$ satisfies $\langle g, \sigma \rangle > \langle g, \sigma^* \rangle \geq 1$ and lies in the region of \cref{lem:L3}~i), where
\[
F(\sigma) = \prod_{g \in T_{\w}} \zeta\big( \langle g, \sigma \rangle \big) \cdot G(\sigma) < \infty .
\]
Since $F$ has nonnegative coefficients, it converges absolutely at every $s$ with $\Re s_i > \sigma^*_i$ coordinatewise, which is the first hypothesis with $c = \sigma^*$.

For the second, apply the meromorphic continuation of \cref{lem:L3}~iii) at $s + \sigma^*$ and separate the polar forms, for which $\langle g, \sigma^* \rangle = 1$, from the auxiliary ones, for which $\langle g, \sigma^* \rangle > 1$:
\[
\begin{split}
H(s)	&	= F(s + \sigma^*) \prod_{g \in T^{\circ}} \langle g, s \rangle \\
	&	= \prod_{g \in T^{\circ}} \zeta\big( \langle g, s \rangle + 1 \big) \langle g, s \rangle
		\cdot \prod_{g \notin T^{\circ}} \zeta\big( \langle g, s \rangle + \langle g, \sigma^* \rangle \big)
		\cdot G(s + \sigma^*) .
\end{split}
\]
Each factor of the first product is entire, the simple pole of $\zeta$ at $1$ being cancelled. Each factor of the second is holomorphic on $\Re \langle g, s \rangle > -\delta_3$, where
\[
\delta_3 = \tfrac{1}{2} \min_{g \in T_{\w} \setminus T^{\circ}} \big( \langle g, \sigma^* \rangle - 1 \big) > 0 ,
\]
with the convention $\delta_3 = \delta_{\w}$ when $T_{\w} = T^{\circ}$, the second product being empty in that case. Take $\delta_1 = \delta_{\w}$. If $\Re \langle g, s \rangle > -\delta_{\w}$ for every $g \in T^{\circ}$ and $\Re \langle g, s \rangle > -\delta_3$ for every $g \in T_{\w} \setminus T^{\circ}$, then
\[
\begin{split}
\langle g, \Re s + \sigma^* \rangle	&	> 1 - \delta_{\w}						\qquad (g \in T^{\circ}), \\
\langle g, \Re s + \sigma^* \rangle	&	> \langle g, \sigma^* \rangle - \delta_3 \geq 1 + \delta_3	\qquad (g \notin T^{\circ}),
\end{split}
\]
so $s + \sigma^* \in \Omega_{\delta_{\w}}$, and $G(s + \sigma^*)$ is holomorphic and bounded there by \cref{lem:L3}~ii). Taking the auxiliary forms to be $\langle g, \cdot \rangle$ for $g \in T_{\w} \setminus T^{\circ}$ gives the second hypothesis on the required domain.

It remains to bound $H$ on vertical lines. On the domain just described $G(s + \sigma^*)$ is bounded, and so is every factor of the second product, since $\Re \big( \langle g, s \rangle + \langle g, \sigma^* \rangle \big) > 1 + \delta_3$ there gives
\[
\left| \zeta\big( \langle g, s \rangle + \langle g, \sigma^* \rangle \big) \right| \leq \zeta(1 + \delta_3) .
\]
For $g \in T^{\circ}$ the corresponding factor is $(w - 1)\zeta(w)$ evaluated at $w = \langle g, s \rangle + 1$, an entire function; writing $t = \Im w$, the classical estimate $\zeta(\tfrac12 + it) \ll_{\varepsilon} (1 + |t|)^{1/6 + \varepsilon}$ and the bound $\zeta(1 + it) \ll \log(2 + |t|)$ give, for every $t$,
\[
\begin{split}
\left| (w-1)\zeta(w) \right|	&	\ll_{\varepsilon} (1 + |t|)^{\frac{7}{6} + \varepsilon}	\qquad (\Re w = \tfrac{1}{2}), \\
\left| (w-1)\zeta(w) \right|	&	\ll_{\varepsilon} (1 + |t|)^{1 + \varepsilon}		\qquad\ \ (\Re w = 1) .
\end{split}
\]
The pole having been cleared, the Phragm\'en--Lindel\"of principle applies to $(w-1)\zeta(w)$ and interpolates these across the strip $\tfrac12 \leq \Re w \leq 1$ to $\ll_{\varepsilon} (1 + |t|)^{1 + (1 - \Re w)/3 + \varepsilon}$, while on $1 \leq \Re w \leq C$ it gives $\ll_{\varepsilon} (1 + |t|)^{1 + \varepsilon}$ for any fixed $C$. Since $\delta_{\w} \leq \tfrac14$, the domain above has $\Re \langle g, s \rangle > -\tfrac12$ and hence $\Re w > \tfrac12$, so that, uniformly for $\Re s$ in a bounded set,
\[
\zeta\big( \langle g, s \rangle + 1 \big) \langle g, s \rangle
\ll_{\varepsilon} \Big( \big| \Im \langle g, s \rangle \big| + 1 \Big)^{1 - \frac{1}{3} \min\{ 0, \, \Re \langle g, s \rangle \} + \varepsilon} .
\]
Multiplying the factors and absorbing the $\varepsilon$'s into the last factor of the majorization gives the third hypothesis with $\delta_2 = 1/3$.

Th\'eor\`eme~1 in the direction $\mathbf{1}$ now yields
\[
N_{\w}(B) = B^{\langle \sigma^*, \mathbf{1} \rangle} \, \Psi(\log B) + O\!\left( B^{\langle \sigma^*, \mathbf{1} \rangle - \theta} \right)
\]
with $\langle \sigma^*, \mathbf{1} \rangle = a(\w)$ and $\deg \Psi \leq \varrho - r$, and $\varrho - r = \beta(\w)$ by \cref{eq-face-dim}. Were $\Psi$ of degree strictly less than $\beta(\w)$, or of degree $\beta(\w)$ with negative leading coefficient, we should have
\[
\limsup_{B \to \infty} \ N_{\w}(B) \, B^{-a(\w)} (\log B)^{-\beta(\w)} \leq 0 ,
\]
contradicting \cref{lem:lower}. Hence $\deg \Psi = \beta(\w)$ exactly and $c_{\Psi} > 0$.
\end{proof}

\begin{rem}\label{rem-nondegenerate}
\cite[Thm.~1]{delaBreteche2001} bounds $\deg \Psi$ from above but does not evaluate it, which is why \cref{lem:lower} enters. The alternative is \cite[Thm.~2(iv)]{delaBreteche2001}, which gives the exact degree outright, and two of its three hypotheses hold at $\sigma^*$ for every $\w$: one has
\[
H(\mathbf{0}) = \prod_{g \notin T^{\circ}} \zeta\big( \langle g, \sigma^* \rangle \big) \cdot G(\sigma^*) > 0
\]
by \cref{lem:L3}~iv), and the requirement that $\sum_i s_i$ lie in the open cone generated by the polar forms is exactly \cref{eq-strict-comp}, which Goldman--Tucker supplies. The third asks that the polar family have full rank $r = n+1$, equivalently that the dual optimum of \cref{eq-LP} be unique, and this one can fail.

It fails already for the well-formed weights $\w = (2,2,3,3)$, whose dual optima form a segment. At its relative interior point $\sigma^* = (\tfrac{1}{2}, \tfrac{1}{2}, \tfrac{1}{2}, \tfrac{1}{2})$ the tight set is
\[
T^{\circ} = \left\{ (1,1,0,0), \ (0,0,2,0), \ (0,0,0,2), \ (0,0,1,1) \right\} ,
\]
of rank $3 < 4$; at the endpoints of the segment the rank does reach $4$, but there the cone condition fails instead. No point of the segment is admissible, and the exact degree rests on \cref{lem:lower}. On the other side of the dichotomy stand the weights $\w = (1,2,3,5)$ of \cref{rem-constant}, for which $r = n+1$ and \cite[Thm.~2(iv)]{delaBreteche2001} would suffice.
\end{rem}

\begin{prop}\label{prop:assembly}
Let $\w = (q_0, \dots, q_n)$ with $\gcd(q_0, \dots, q_n) = 1$, and let $N_{\w}$ be the tuple count of \cref{lem:lower}. Then for every $X \geq 1$,
\[
Z^{\circ}_{\h}\big( \WP^n_{\w}(\Q), X \big) = 2^{n} \, N_{\w}\big( X^{q} \big) .
\]
\end{prop}

\begin{proof}
By \cref{eq-h-pullback}, a point $\p$ with all coordinates nonzero satisfies $\h(\p) \leq X$ if and only if $H(\phi(\p)) \leq B$, where $B = X^q$, and $\phi(\p)$ has all coordinates nonzero. Since every rational preimage of a point with nonzero coordinates has nonzero coordinates, grouping the points of $\WP^n_{\w}(\Q)$ by their images gives
\[
Z^{\circ}_{\h}\big( \WP^n_{\w}(\Q), X \big) = \sum_{y} \# \phi^{-1}(y)(\Q) = m_{\w} \cdot L(B),
\]
where the sum runs over the liftable $y \in \bP^n(\Q)$ with all coordinates nonzero and $H(y) \leq B$, the quantity $L(B)$ is the number of such $y$, and $m_{\w}$ is the fiber multiplicity of \cref{lem:fiber-count}: with $\nu = \#\{ i : n_i \text{ even} \}$, one has $m_{\w} = 2^{\nu - 1}$ if $q$ is even and $m_{\w} = 1$ if $q$ is odd, constant over such $y$.

We count $L(B)$ by primitive integral representatives. Each $y$ has exactly two, $\pm(y_0, \dots, y_n)$, with all $y_i \neq 0$, $\gcd(y_0, \dots, y_n) = 1$, and $H(y) = \max_i |y_i|$. Primitivity says $\min_i v_p(y_i) = 0$ at every prime, so by \cref{prop:lifting-criterion} the point is liftable if and only if the vector $\big( v_p(|y_i|) \big)_i$ lies in $M_{\w}$ for every prime $p$, a condition on the absolute values alone, and the coordinates $y_i$ with $n_i$ even share a sign, a condition preserved by $y \mapsto -y$.

Fix $u \in \Z_{\geq 1}^{n+1}$ with $\max_i u_i \leq B$ and valuation vectors in $M_{\w}$; these $u$ are counted by $N_{\w}(B)$. The primitive tuples with $|y| = u$ are the $2^{n+1}$ sign patterns $\varepsilon \odot u$, $\varepsilon \in \{\pm 1\}^{n+1}$. If $q$ is odd, then $\nu = 0$, the sign condition is vacuous, and all $2^{n+1}$ patterns are liftable, giving $2^{n+1}/2 = 2^n$ liftable points per $u$ and
\[
Z^{\circ}_{\h} = 1 \cdot L(B) = 2^n N_{\w}(B) .
\]
If $q$ is even, then $\nu \geq 1$ by coprimality of the weights, the admissible patterns are those constant on the $\nu$ even-exponent coordinates, in number $2 \cdot 2^{n+1-\nu}$, giving $2^{n+1-\nu}$ liftable points per $u$ and
\[
Z^{\circ}_{\h} = 2^{\nu - 1} \cdot L(B) = 2^{\nu - 1} \cdot 2^{n+1-\nu} N_{\w}(B) = 2^n N_{\w}(B) .
\]
In both cases the identity holds.
\end{proof}

\begin{proof}[Proof of \cref{thm:main}]
Combine \cref{prop:assembly} with \cref{lem:L4} at $B = X^q$:
\[
Z^{\circ}_{\h}\big( \WP^n_{\w}(\Q), X \big) = 2^n \left( X^{q \, a(\w)} \, \Psi\big( q \log X \big) + O\big( X^{q(a(\w) - \theta)} \big) \right),
\]
so the theorem holds with $P_{\w}(x) = 2^n \Psi(qx)$, of the same exact degree $\beta(\w)$ as $\Psi$ and with leading coefficient $c_{\w} = 2^n q^{\beta(\w)} c_{\Psi} > 0$, and with $\theta$ replaced by $q\theta$.
\end{proof}

\begin{rem}\label{rem-constant}
The Tauberian theorem of \cite{delaBreteche2001} produces the polynomial $P_{\w}$, and in particular the leading constant $c_{\w}$, but not in closed form: they are assembled from residues of the multivariable series $F$ along the polar divisors indexed by $T^{\circ}$. The expected shape of $c_{\w}$ is a product of local densities attached to the lifting set $M_{\w}$ against a volume attached to the optimal face of \cref{eq-LP}, in the spirit of the conjectural constants for the classical Batyrev--Manin problem; making this explicit, together with the error exponent $\theta$, is the natural next step.

The natural first target is $\w = (1,2,3,5)$. There the congruences of \cref{eq-monoid} collapse to $t_j \equiv t_0 \pmod{n_j}$ with $(n_0, \dots, n_3) = (30, 15, 10, 6)$, the axis constraints of \cref{eq-LP} force
\[
\sigma^* = \left( \tfrac{1}{30}, \ \tfrac{1}{15}, \ \tfrac{1}{10}, \ \tfrac{1}{6} \right) = \frac{\w}{q} ,
\]
and this point is dual feasible, so it is the unique optimum: $T^{\circ}$ consists of the four axis generators, $r = n+1$, and $\beta(\w) = 0$, so that $P_{\w}$ is a constant and \cref{rem-nondegenerate} applies. Here $q \, a(\w) = Q$, the counting exponent of the weighted height coinciding with that of the tautological height.
\end{rem}


\section{A weighted Batyrev--Manin conjecture}\label{sec-7}
The results of the preceding sections give the following picture. The tautological height counts at the exponent $mQ$ over every number field (\cref{thm-1}, \cref{thm-2}). For presentations with pairwise coprime exponents, where $\phi$ is close to an isomorphism on rational points, the weighted height counts at the exponent $qme(n+1)$ for all degrees $e$ (\cref{prop-hcount}, \cref{thm-3}). For arbitrary coprime weights over $\Q$, \cref{thm:main} gives the interior count $Z^{\circ}_{\h}$ at the exponent $q\,a(\w)$ with logarithmic power $\beta(\w)$, both read off the linear program \cref{eq-LP} over the lifting set, and \cref{cor:full-count} assembles the full counting function from the coordinate strata, the growth being that of the lexicographically maximal pair $(\alpha_S, \beta_S)$.

This dichotomy is a statement about presentations, not only about varieties. For the chart family the exponents are $(q, 1, \dots, 1)$, the degree of \cref{lem:d_phi_definition} is $d_\phi = 1$, and $\phi([x_0 : \cdots : x_n]) = [x_0^q : x_1 : \cdots : x_n]$ is precisely the well-formalization isomorphism $\WP^n_{(1,q,\dots,q)} \cong \bP^n$; no sparsity can occur because $\phi$ is an isomorphism of varieties. For a well-formed presentation with $q \geq 2$, by contrast, $d_\phi = q^n/\prod q_i > 1$ and \cref{lem:wf-pairs} forces Kummer conditions at every prime. Since both $\h$ and $\hst$ are attached to the presentation $\w$ rather than to the underlying variety, and since presentations arising from moduli problems, such as $(2,4,6,10)$ for genus-two curves, are frequently not well-formed, both regimes occur in practice; the two are related by base change of the height parameter, the well-formalization preserving the exponents $n_i$, so that $Z_{\h,(2,4,6,10)}(X) = Z_{\h,(1,2,3,5)}(X^2)$.

For the tautological height, \cref{thm-1} and \cref{thm-2} confirm the exponent $Q$ predicted by the stacky Batyrev--Manin--Malle framework of \cite{ESZB23}, within which $\hst$ is the height of the tautological bundle; see also \cite{Dar21}. The weighted height is not covered by that framework, and \cref{thm:main} plays the role of the Batyrev--Manin asymptotic for it: the exponent $q\,a(\w)$ is a Fujita-type invariant computed by a polyhedral program, the power of the logarithm is governed not by a Picard rank, which is trivial here since $\Pic(\WP^n_{\w}) \cong \Z$, but by the dimension of the optimal face of that program, that is, by the lattice of Kummer conditions, and the accumulating subvarieties are the coordinate strata singled out by \cref{cor:full-count}. What remains is the extension of \cref{thm:main} beyond $\Q$.

\begin{conj}[Weighted Batyrev--Manin for the weighted height]\label{conj-1}
Let $n \geq 1$, let $\w = (q_0, \dots, q_n)$ with $\gcd(q_0, \dots, q_n) = 1$, and let $k$ be a number field of degree $m$. Then there exist $c = c_{\w}(k) > 0$ and an integer $\beta_k(\w) \geq 0$ such that
\[
Z^{\circ}_{\h}\big( \WP^n_{\w}(k), X \big) \sim c \, X^{m q \, a(\w)} (\log X)^{\beta_k(\w)} ,
\]
where $Z^{\circ}_{\h}$ counts the points with all coordinates nonzero.
\end{conj}

The exponent should not depend on $k$: it is read off the linear program \cref{eq-LP}, which is a program over $T_{\w}$, and $T_{\w}$ is determined by the exponents $n_i$ alone. The logarithmic power may. For $m > 1$ the conditions defining the lifting set acquire a Selmer-type contribution from $\cO_k^\times$ and the class group of $k$, and by \cref{prop:number-field} the valuation conditions are then necessary but no longer sufficient; this is why we write $\beta_k(\w)$ rather than $\beta(\w)$, and computing the resulting local densities over $k \neq \Q$ is the first step toward the conjecture.

The conjecture determines the full counting function as well. The argument of \cref{eq-stratification} applies over any $k$ unchanged, the coordinates outside $S$ imposing no condition on $\lambda$, so that restriction to $S$ is a bijection carrying tuples over $k$ to tuples over $k$; hence $Z_{\h}(\WP^n_{\w}(k), X)$ grows like the sum of the lexicographically maximal strata, exactly as in \cref{cor:full-count} over $\Q$, and a proper stratum may dominate.

The evidence is as follows. For $k = \Q$ the conjecture is \cref{thm:main}. For pairwise coprime exponents and arbitrary $k$ it holds as well: there $a(\w) = n+1$ and $\beta(\w) = 0$, and \cref{rem-thm3} gives $Z_{\h}(\WP^n_{\w}(k), X) = S_k(n) X^{qm(n+1)} + O(X^{qm(n+1)-q}\log X)$, the strata contributing $O(X^{qmn})$. The two confirmations are independent, the first fixing the base field and letting the weights vary, the second fixing the weights and letting the field vary.

The refinement to points of a fixed degree $e > 1$ is not open so much as ill posed, and it is worth recording why. The set $\WP^n_{\w}(k;e)$ of \cref{sec-2} is cut out by the condition $[k(\p):k] = e$, that is, by $\phi(\p) \in \bP^n(k;e)$, so that $\WP^n_{\w}(k;e) = \phi^{-1}(\bP^n(k;e))$ for arbitrary weights. Every $y \in \bP^n(k;e)$ with nonzero coordinates has exactly $d_\phi$ preimages by \cref{lem:d_phi_definition}, all of degree $e$, and $\h(\p) \leq X$ if and only if $H(\phi(\p)) \leq X^q$ by \cref{eq-h-pullback}; hence
\[
Z^{\circ}_{\h}\big( \WP^n_{\w}(k;e), X \big) = d_\phi \cdot Z^{\circ}_H\big( \bP^n(k;e), X^{q} \big)
\]
identically, with exponent $q m e (n+1)$ and no Kummer condition anywhere; for pairwise coprime exponents $d_\phi = 1$ and this is \cref{thm-3}. The arithmetic of \cref{thm:main} lives instead in the finer notion, the field over which $\p$ admits a representative, and by \cref{rem-fod} the two notions agree only in the pairwise coprime case. A degree-$e$ conjecture for $\h$ therefore presupposes a minimal field of representation for general $\w$, which is what \cref{lem-pc-bij}~ii) supplies in the coprime case and leaves open otherwise; settling that is the prior question.

\begin{rem}
The conjecture concerns the specific polarization underlying $\h$, namely the pullback of $\cO(1)$ normalized by $1/q$. For a general ample divisor $A$ on a weighted variety $W \subseteq \WP^n_{\w}$ with $\h_A = H_A(\phi(\,\cdot\,))^{1/q}$, the expected exponent involves the Fujita invariant $\alpha(A)$, the minimal $t > 0$ such that $K_W + tA$ is nef \cite{Fuj90}, as in the classical conjecture for toric varieties \cite{Batyrev-Tschinkel-1998, FultonToric}; we do not formulate a precise statement here.
\end{rem}

\begin{rem}
The conjecture is meaningful only for varieties on which rational points are dense. For a smooth curve $C \subset \WP^n_{\w}(k)$ of genus $g \geq 2$, the image $\phi(C)$ is a curve of the same genus in $\bP^n$, so $C(k)$ is finite by Faltings' theorem and $Z_{\h}(C(k), X) = O(1)$; no growth statement is available or needed. On the other hand, for weighted subvarieties arising from moduli problems on which rational points are dense, such as the locus $\cL_2 \subset \WP^3_{(2,4,6,10)}(\Q)$ of genus-two curves with extra automorphisms \cite{2024-03}, height bounds of the type proved here control the arithmetic of the corresponding curves, with applications to isogeny-based cryptography.
\end{rem}

Our findings underscore the role of weighted geometry as a source of arithmetic sparsity, and locate that sparsity precisely: it is invisible for presentations with pairwise coprime exponents, where $\phi$ is the well-formalization isomorphism; for well-formed presentations it is governed by the polyhedral geometry of the lifting set, which can lower the counting exponent below the projective benchmark $q(n+1)$, raise it beyond $Q$ through accumulating families, interior as for $(2,3,5)$ or supported on coordinate strata as in \cref{cor:full-count}, or leave pure Schanuel growth with a starved constant. This refines the naive expectation that sparsity should appear as a constant factor: it is instead a structured phenomenon dictated by the exponents $n_i$, invisible under the well-formalization and forced by \cref{lem:wf-pairs} otherwise. The weighted Batyrev--Manin conjecture of \cref{conj-1} sets this in a wider frame, and the count over a number field remains the natural next step.


 \bibliography{sh-94}
\end{document}